\documentclass[12pt,twoside,reqno,psamsfonts]{amsart}

\usepackage{graphicx}
\usepackage{amsmath}
\usepackage{amssymb}
\usepackage{enumerate}
\usepackage{verbatim}
\usepackage{url}

\numberwithin{equation}{section}
\setcounter{section}{0}
\setcounter{page}{1}

\newcommand{\R}{\mathbb{R}}
\newcommand{\C}{\mathbb{C}}

\newcommand{\N}{\mathbb{N}}

\newcommand{\cL}{\mathcal{L}}

\newcommand{\cZ}{\mathcal{Z}}

\newcommand{\e}{\mathbf{e}}
\renewcommand{\d}{\mathbf{d}}

\newcommand{\cR}{\mathcal{R}}

\newcommand{\cW}{\mathcal{W}}

\newcommand{\cP}{\mathcal{P}}

\newcommand{\cA}{\mathcal{A}}

\newcommand{\eps}{\varepsilon}

\newcommand{\Hd}{\dim_\mathrm{H}}

\newcommand{\Z}{\mathbb{Z}}

\renewcommand{\emptyset}{\varnothing}
\renewcommand{\epsilon}{\varepsilon}
\renewcommand{\rho}{\varrho}
\renewcommand{\phi}{\varphi}

\renewcommand{\mod}{\,\,\mathrm{mod\,}}

\renewcommand{\a}{\mathbf{a}}
\renewcommand{\b}{\mathbf{b}}
\renewcommand{\c}{\mathbf{c}}
\newcommand{\w}{\mathbf{w}}

\newcommand{\A}{\mathbf{A}}
\newcommand{\B}{\mathbf{B}}

\renewcommand{\hat}{\widehat}

\renewcommand{\iint}{\int\hspace{-0.1in}\int}

\DeclareMathOperator{\supp}{supp}

\theoremstyle{plain}
\newtheorem{thm}{Theorem}[section]
\newtheorem{theorem}{Theorem}[section]

\newtheorem{lemma}[thm]{Lemma}
\newtheorem{prop}[thm]{Proposition}

\theoremstyle{definition}

\newtheorem{definition}[thm]{Definition}

\newtheorem{question}[thm]{Question}

\theoremstyle{remark}
\newtheorem{remark}[thm]{Remark}

\usepackage[usenames,dvipsnames]{xcolor}

\graphicspath{ {pictures/} }

\oddsidemargin  0.0in
\evensidemargin 0.0in
\textwidth      6.5in
\headheight     0.0in
\topmargin      0.0in
\textheight     9.0in

\begin{document}

\title{Fourier transform and expanding maps on Cantor sets}

\author{Tuomas Sahlsten}
\email{tuomas.sahlsten@aalto.fi}
\address{Department of Mathematics and Systems Analysis, Aalto University, Finland}

\author{Connor Stevens}
\email{connor.stevens@manchester.ac.uk}
\address{Department of Mathematics, University of Manchester, UK}

\thanks{TS was supported by a start-up fund from the University of Manchester. CS was supported by the Doctoral Programme in the University of Manchester and EPSRC}

\maketitle

\begin{abstract}
We study the Fourier transforms $\widehat{\mu}(\xi)$ of non-atomic Gibbs measures $\mu$ for uniformly expanding maps $T$ of bounded distortions on $[0,1]$ or Cantor sets with strong separation. When $T$ is totally non-linear, then $\widehat{\mu}(\xi) \to 0$ at a polynomial rate as $|\xi| \to \infty$. 
\end{abstract}


\section{Introduction}

Given a Borel measure $\mu$ on $\R$ and a frequency $\xi \in \R$, then the corresponding Fourier coefficient (or amplitude in frequency $\xi$) associated to $\mu$ is given by the complex number
$$\widehat{\mu}(\xi) = \int e^{-2\pi i \xi x} \, d\mu(x)$$ 
for $\xi \in \R.$ The Fourier coefficients of $\mu$ relate closely to various fine structure properties of the measure. For example, the Riemann-Lebesgue lemma states that if $\mu$ is absolutely continuous with $L^1$ density, then $\widehat{\mu}(\xi)$ converges to $0$ when the frequencies $|\xi| \to \infty$; we call such $\mu$ \textit{Rajchman measures} \cite{Lyons}. In contrast for atomic measures $\mu$, Wiener's theorem \cite{Lyons} says that their Fourier transform $\widehat{\mu}(\xi)$ cannot converge to $0$ as $|\xi| \to \infty$. The intermediate case, namely, fractal measures has a great deal of history. For example, it relates to classifying the sets of uniqueness and multiplicity for trigonometric series \cite{Cohen,K,JialunSahlsten1}, Diophantine approximation \cite{PVZZ}, Fourier dimension \cite{Mattila15,EkstromSchmeling} and $L^p$ multiplier theory \cite{Sarnak,SidorovSolomyak,Stein,Zygmund}. For the middle-third Cantor measure, the Fourier transform cannot decay at infinity due to invariance under $\times 3 \mod 1$, but some other fractal measures such as random measures (see Salem's work \cite{Salem} on random Cantor measures or Kahane's work on Brownian motion \cite{KahaneImage,KahaneLevel}), measures arising in Diophantine approximation (see Kaufman's measures on well and badly approximable numbers \cite{Kauf2overalpha,Kaufman}), non-Pisot Bernoulli convolutions \cite{Salem}, and self-similar measures with suitable irrationality properties \cite{JialunSahlsten1, Bremont, VarjuYu} all exhibit decay of Fourier coefficients.

In a random setting, the conditions for Fourier decay usually require certain rapid correlation decay properties of the processes such as independent increments on the Brownian motion (see Kahane's work \cite{KahaneImage,KahaneLevel}), or other independence or Markov properties (see the works of Shmerkin and Suomala \cite{ShmerkinSuomala}). In the deterministic setting, the known examples are currently suggesting some form of nonlinearity starting from the works of Kaufman \cite{Kaufman,Kauf2overalpha}, where Rajchman measures were constructed on sets of well and badly approximable numbers. Such sets are naturally constructed using the \textit{Gauss map} defined by
$$G : x \mapsto \frac{1}{x} \mod 1, \quad x \in (0,1], \quad G(0) = 0$$

The Gauss map forms a crucial dynamical system in the theory of Diophantine approximation as it can be used to generate continued fraction expansions, and the geodesic flow on modular surface can be connected to its evolution by suspension flows \cite{BowenSeries}. In contrast to the $\times 3$ map, which has fully linear inverse branches, the Gauss map exhibits nonlinear inverse branches. Any $\times 3$ invariant measure cannot have Fourier decay, but as proven by Jordan and the first author \cite{JordanSahlsten} in 2013, when assuming certain correlation properties from the invariant measures for the Gauss map (Bernoulli or more generally Gibbs property) and finite Lyapunov exponent, then invariant measures of large enough dimensions exhibit Fourier decay. However, the proof in \cite{JordanSahlsten} which was based on ideas of Kaufman \cite{Kaufman} and Queff\'elec and Ramar\'e \cite{QetR}, was based heavily on Diophantine approximation, in particular the distributional properties of continuants \cite{QetR}. Hence it would be interesting to know what  properties of general dynamical systems could replace these Diophantine results used in \cite{JordanSahlsten}.

The story progressed in 2017 when Bourgain and Dyatlov \cite{bourg} adapted the discretised sum-product theory from additive combinatorics developed by Bourgain \cite{Bourgain2010} and proved Fourier decay of Patterson-Sullivan measures for convex cocompact Fuchsian groups. This was also proved by Li \cite{Jialun} using different renewal theoretic tools from random walks on matrix groups. Patterson-Sullivan measures are self-conformal measures associated to an iterated function system given by contractive fractional linear transformations
$$x \mapsto \frac{ax+b}{cx+d}, \quad x \in [0,1]$$
with $ad-bc = 1$ and $a,b,c,d \in \R$ chosen such that the map is a contraction. This reflects the situation of the Gauss map, where the inverse branches of the map are of the form
$$x \mapsto \frac{1}{x+a}, \quad x \in [0,1]$$
for $a \in \N$, so the work of Bourgain and Dyatlov generalises \cite{JordanSahlsten} to more general fractional linear transformations. The motivation for the results of Bourgain and Dyatlov \cite{bourg} is to establish a \textit{Fractal Uncertainty Principle} for the limit sets of Fuchsian groups. Fractal Uncertainty Principles, which have a few different forms, were introduced by Dyatlov and Zahl \cite{DyatlovZahl} as a powerful harmonic analytic tool to understand for example Pollicott-Ruelle resonances in open dynamical systems \cite{BourgainDyatlov}, and delocalisation of semiclassical limits of eigenfunctions for the Laplacian in Quantum Chaos \cite{DJ17}. We refer to the survey of Dyatlov \cite{Dyatlov} on more about the history and ideas behind Fractal Uncertainty Principles.

In the study of dimension theory for equilibrium states of fractional linear transformations, one is often able to generalise the results to equilibrium states $\mu$ associated to general expanding interval maps $T$ with enough regularity or distortion assumptions on the inverse branches of $T$ \cite{Keller}. However, recall that Fourier decay is not possible for fractal invariant measures of the interval map $T(x) = 3x \mod 1$ (e.g. for the middle-third Cantor measure), so some conditions on $T$ are required. The works \cite{JordanSahlsten,bourg} both share that the corresponding interval map $T$ is defined by \textit{non-linear} inverse branches. However, in order to have Fourier decay for $\mu$, one needs some type of \textit{non-concentration} of derivatives of $T$ along different orbits. Controlling the non-concentration of derivatives requires structure from the measure, which is missing from the middle-third Cantor set for example, but is present for Patterson-Sullivan measures of Fuchsian groups using the Fuchsian group structure \cite{bourg}, or for equilibrium states associated to the Gauss map using metric number theory and Diophantine approximation methods \cite{JordanSahlsten,QetR}. In our work we will find out that the necessary non-concentration is closely related to the cohomological properties of the \textit{distortion function} 
$$\tau = \log |T'|.$$

In order to state our main result formally, let us introduce some notation. We will follow a similar notation as Naud \cite{Naud}, but allow the intervals to meet at the boundary. Let $I_1,\dots,I_N$, $N \geq 2$, be closed intervals in $[0,1]$, with disjoint interiors. Let $T : I = \bigcup_{a = 1}^N I_a \to [0,1]$ be a mapping such that each restriction $T_a := T|_{I_a}$ is a $C^2$ diffeomorphism and assume $T$ is conjugated to the full shift on $\cA^\N$, where $\cA = \{1,\dots,N\}$. We say $T$ is \textit{uniformly expanding Markov map of bounded distortions} if it satisfies the conditions:
\begin{itemize}
\item[(1)] \textit{Uniform expansion}: There exists $\gamma > 1$ and $D > 0$ such that for all $n \in \N$ and all $x \in  I$ we have
$$|(T^n)'(x)| \geq D^{-1} \gamma^n.$$
\item[(2)] \textit{Markov property}: For all $a,b = 1,\dots, N$, if $T(I_b) \cap \mathrm{Int}(I_a) \neq \emptyset$, then $T(I_b) \supset I_a$.
\item[(3)] \textit{Bounded distortions}: For the distortion function $\tau = \log |T'|$, there there exists $B < \infty$ such that
 	$$ \|\tau'\|_\infty = \|T'' / T' \|_\infty \leq B.$$
\end{itemize}

Write $K = \bigcap_{ n = 0}^\infty T^{-n} ([0,1])$. If the intervals $I_a$ are disjoint, then $K$ is a Cantor set, if fact, that $K$ is the attractor to an iterated function system $\Phi := \{f_a : a \in \cA\}$, so conditions (1)-(3) are about the properties of the contractions $f_a$. In the literature \cite{PRKS} on sometimes calls $K$ a \textit{self-conformal set} and $\{f_a : a \in \cA\}$ a \textit{self-conformal iterated function system}.

For a given negative continuous potential $\phi : I \to \R$ such that the \textit{variations }
$$\mathrm{var}_n(\phi) =  \sup\{|\phi(f_{\a} (x)) - \phi(f_{\a} (y))| : x,y \in I, \a \in \cA^n\} = O(\rho^n),$$ 
as $n \to \infty$ for some $0 < \rho < 1$, we are interested in the measure $\mu = \mu_\phi$ on $K$ that realises the variational formula for the pressure:
$$P(\phi) = \sup\Big\{h_\mu(T) + \int_K \phi \, d\mu : \mu = T\mu \Big\}$$
where $h_\mu(T)$ is the entropy of $T$ with respect to $\mu$, which is called an \textit{equilibrium measure} (or \textit{Gibbs measure}), see Section \ref{sec:thermo} for more details and \cite{Keller} for more background. Note that under the assumptions (1)-(3), we know that \cite{Keller} the Hausdorff dimension of an equilibrium state $\mu$ is given by $\Hd \mu = h_\mu(T) / \lambda_\mu(T)$, where $\lambda_\mu(T) = \int \tau \, d\mu$ is the \textit{Lyapunov exponent} of $\mu$. Furthermore, let $s_0 > 0$ be the unique real solution to the equation $P(-s \tau) = 0$ for the distortion function $\tau = \log |T'|$. Then, if $K$ is a Cantor set, Bowen's formula \cite{Keller} gives that the Hausdorff dimension of $K$ is $\Hd K = s_0$. In the language of iterated function systems \cite{PRKS}, one also calls these Gibbs measures \textit{self-conformal measures} on the self-conformal set $K$. 

For the Fourier decay of $\mu_\phi$, the analysis will split into two cases on whether the dynamics of $T$ can be conjugated to a system with linear inverse branches or not. This can be formalised using the cohomological notion of total non-linearity of $T$:

\begin{itemize}
\item[(4)] \textit{Total non-linearity}:  $\tau = \log |T'|$ is \textit{not $C^1$ cohomologous to a locally constant function}: it is not possible to write
$$\tau = \psi_0 + g \circ T - g$$
on $I = \bigcup_{a = 1}^N I_a$ for $\psi_0 : I \to \R$ constant on every $I_a$, $a \in \cA$, and $g \in C^1(I)$.
\end{itemize}

In the language of analytic IFSs, the total non-linearity condition (4) follows if the IFS $\Phi = \{f_a : a \in \cA\}$ is \textit{not $C^2$ conjugated to a self-similar iterated function system}: there does not exist a $C^2$ diffeomorphism $h : \R \to \R$ such that the iterated function system $h\Phi = \{h f_a h^{-1} : a \in \cA\}$ consists of similitudes $x \mapsto r_a x + b_a$, $a \in \cA$, $0 < r_a < 1$, $b_a \in \R$. 

Total non-linearity assumption goes back to the uniform non-integrability properties of the unstable and stable foliations of Anosov flows and their symbolic properties, see for example Dolgopyat's works \cite{Dolgopyat1,Dolgopyat} and it is satisfied for example for various quadratic Julia sets and for limit sets of Schottky groups \cite{Naud}. In $\bigcup_{a = 1}^N I_a = [0,1]$ case, $K = [0,1]$, so total non-linearity is equivalent to the \textit{Uniform Non-Integrability} (UNI) of the distortion function $\tau$ (see the discussion later in Section \ref{sec:nonconcentration} below for a definition), which is proved in Avila, Gou\"ezel and Yoccoz \cite[Proposition 7.5]{AGY} on uniformly expanding maps on John domains in their study of exponential mixing of the Teichm\"uller flow. If the intervals $I_a$, $a \in \cA$, are disjoint, and one further assumes $f_a$ are analytic, then total non-linearity is equivalent to UNI, which follows by investigating the proof of \cite[Proposition 7.5]{AGY} or Naud's work \cite[Lemma 4.3 and Proposition 5.1]{Naud}. This equivalence goes back to the notion of \textit{Anosov alternative} by Dolgopyat \cite{Dolgopyat}, see also Magee, Oh, Winter \cite{MOW} for discussion and references there-in.

For an expanding Markov map $T$ satisfying UNI and equilibrium states $\mu = \mu_\phi$ associated to $T$, we will see that we can obtain a very quantitative non-concentration of derivatives using contractions of complex transfer operators in a Banach space of Lipschitz functions on $I$ associated to the potential $\phi+ib\log |T'|$ for suitable $b \in \R$ depending on the frequency $\xi$ we have when estimating $\widehat{\mu}(\xi)$ and a Lipschitz norm depending on $b$. The contraction of the transfer operator is possible because of the UNI due to \textit{Dolgopyat's method} (see Dolgopyat's works \cite{Dolgopyat1,Dolgopyat}) and the particular version we use that works for our potentials is due to Stoyanov \cite{Stoyanov} that generalises the work of Naud \cite{Naud}. Then combining the large deviation argument of \cite{JordanSahlsten} with the reduction argument \cite{bourg} to the discretised sum-product estimates of the Fourier transform under non-concentration, we can establish polynomial Fourier decay for all Gibbs measures $\mu$ for totally non-linear expanding interval maps $T$:

\begin{thm} \label{thm:nonlinear}
Suppose $T : \bigcup_{a = 1}^N I_a \to [0,1]$ is totally non-linear uniformly expanding Markov map of bounded distortions and $\mu$ is a non-atomic equilibrium state associated to a potential with exponentially vanishing variations.
\begin{itemize}
\item[(1)] If $\bigcup_{a = 1}^N I_a = [0,1]$ and $f_a$ are $C^2$, then the Fourier coefficients of $\mu$ tend to zero with a polynomial rate. 
\item[(2)] If $\{I_a : a \in \cA\}$ are disjoint and $f_a$ are analytic, then the Fourier coefficients of $\mu$ tend to zero with a polynomial rate.
\end{itemize}
\end{thm}

Comparing Theorem \ref{thm:nonlinear} to \cite{JordanSahlsten} on Gibbs measures for the Gauss map and \cite{bourg} on Patterson-Sullivan measures for convex co-compact hyperbolic surfaces, we do not get these immediately as corollaries. This is because the Gauss map has infinitely many branches and for Patterson-Sullivan measures we would need to describe the symbolic model using a subshift of finite type (SFT) coming from the Schottky structure of the Fuchsian group, where as we only consider full shift. Gauss map case would follow from Theorem \ref{thm:nonlinear}(1) if one could establish it for infinitely many branches. This should go through under a suitable tail assumption so that the large deviations are still exponentially fast and proving (or adapting) a spectral gap theorem similar to Stoyanov for countable Markov shift with a suitable tail assumption, which we will discuss a bit more later in the introduction. Furthermore, the Patterson-Sullivan case follows from Theorem \ref{thm:nonlinear}(2) if we could expand it to Markov measures, that is, when the symbolic model we use is any SFT.  Thermodynamical formalism we use and the spectral gap theorem by Stoyanov should work in this case as well. Expanding Theorem 1.1 to these settings is a good problem to do in the future.

In the language of iterated function systems, Theorem \ref{thm:nonlinear}(2) means that any Gibbs self-conformal measure in the sense of \cite{PRKS} for totally non-linear analytic IFS with self-conformal set $K$ has polynomial Fourier decay. This setting was also considered by Hochman and Shmerkin \cite{HochmanShmerkin}, where using spectral properties of the scenery flow they proved the normality of orbits of $\{n^k x\}_{k \in \N}$ for natural numbers $n \geq 2$ was established for $\mu$ almost every $x$ under total non-linearity for analytic IFSs. Theorem \ref{thm:nonlinear}, thanks to the Davenport-Erd\"os-LeVeque criterion for equidistribution \cite{DEL} gives the equidistribution for $\{s_k x\}_{k \in \N}$ at $\mu$ almost every $x$ for any strictly in increasing sequence $s_k \to \infty$. It would be interesting to see if there is connection between our result and their methods. 

When $T$ is \textit{not} totally non-linear, that is, $\tau$ is $C^1$ cohomologous to a locally constant function, then several recent works have studied this case, which we will review below:

\begin{remark}
\begin{itemize}
\item[(1)] If $\tau$ equals to a locally constant function, that is, $T$ has linear inverse branches, the $T$-invariant Cantor sets $K$ are now called \textit{self-similar sets} and the Gibbs measures $\mu$ associated to $T$ are called \textit{self-similar measures} and they are assumed to just be \textit{Bernoulli measures}: $\mu = \sum_{a \in \cA} p_a f_a(\mu)$ where $0 < p_a < 1$ and $\sum_{a \in \cA} p_a = 1$. Here the situation has also been evolving independently from similar angles and has roots in the theory of Bernoulli convolutions since the seminal works of Salem, Zygmund, Erd\"os \textit{et al.} \cite{Salem2, SZ, Erdos}.  In \cite{JialunSahlsten1} it was proved using quantitative renewal theorems for the stopping time for random walks on $\R$ that $\widehat{\mu} \to 0$ as long as the value set of $\tau$ is not contained in an arithmetic progression, i.e. the range of $\tau$ is not a lattice. These were inspired by the work on Fourier decay of the stationary measure \cite{Jialun,JialunNaudPan}. Moreover, if $\tau$ avoids a lattice in a strong way, then one can obtain logarithmic rates. Solomyak \cite{Solomyak} in particular also used Erd\"os-Kahane method to prove polynomial Fourier decay for all non-atomic $\mu$ except for a zero-Hausdorff dimensional exceptional set of parameters defining $T$. Further characterisations in the case $\tau$ is lattice was done by Br\'emont \cite{Bremont} and Varj\'u-Yu \cite{VarjuYu} using number theoretic properties of the lattice.
\item[(2)] If $\tau$ is $C^1$ cohomologous to an everywhere constant function, that is, 
$$\tau = c + g \circ T - g,$$ 
for some fixed constant $c > 0$ and the conjugacy $g : \R \to \R$ is a diffeomorphism that has non-vanishing derivative everywhere, then $\widehat{\mu}(\xi) \to 0$ with a polynomial rate for any Bernoulli measure $\mu$ associated to $T$. This is because the $g$ conjugacy can be used to find a $C^2$ diffeomorphism $h : \R \to \R$ such that the iterated function system $\{h f_a h^{-1} : a \in \cA\}$ consists of similitudes with the same contraction ratio $r_a = e^{-c} < 1$ for all $a \in \cA$ (i.e. it is \textit{homogeneous} self-similar iterated function system), $(h^{-1})'' > 0$ everywhere, and that $\mu$ is the push-forward of a self-similar measure under $h^{-1}$. Then one can then directly apply the Fourier decay of non-linear $C^2$ images of homogeneous self-similar measures proved in \cite{KaufmanSurvey} for Cantor-Lebesgue measures and \cite[Theorem 3.1]{MS} for general homogeneous self-similar measures.
\item[(3)] Going beyond the cases (1) and (2), it would be interesting to see how much this result could be relaxed or extended to Gibbs measures $\mu$ and systems $\{f_a : a \in \cA\}$ where the $C^2$ conjugacy is with an \textit{inhomogeneous} self-similar iterated function system (i.e. $\log |T'|$ is $C^1$ conjugated to a locally constant map) or for the $C^2$ conjugacy we have $h''(x) = 0$ at some $x \in \R$. The methods of \cite{MS} and \cite{KaufmanSurvey} use heavily the convolution structure of homogeneous self-similar measures so one would need to maybe rely on the renewal theoretic methods \cite{JialunSahlsten1} to address this case or Solomyak's approach for polynomial Fourier decay for almost every inhomogeneous self-similar measures using Erd\"os-Kahane method \cite{Solomyak}. There are other weaker notions of total non-linearity in the literature that could be used to generalise Theorem \ref{thm:nonlinear}. One says (e.g. \cite{Naud}) that $\tau \in C^1(I)$ is called \textit{non-lattice} if there does not exist $L : K \to m\Z$ for some $m > 0$ and $g : K \to \R$ Lipschitz such that $\tau$ satisfies the cohomological equation
$$\tau = L + g - g \circ T$$
on $K$. However, the spectral gap estimates for complex transfer operators we employ by Stoyanov \cite{Stoyanov} are not in general true. For example in Naud \cite{Naud} gave examples of non-lattice $\tau$ but not totally non-linear $\tau$ with failure of the conclusions of Naud's results \cite{Naud}, which are a special case of Stoyanov's result. We also remark of the \textit{Diophantine condition} used by Pollicott and Sharp \cite{PS}, which gives weaker contraction theorems for the transfer operators, and it could be considered as a notion between non-lattice and total non-linearity. Hence it would be interesting to see if Theorem \ref{thm:nonlinear} holds for any of these weaker notions of the non-linearity of $T$. 
\end{itemize}
\end{remark}

If we relax the condition on finite branches to infinite number of inverse branches, then we would need to rely on other works on spectral gaps of transfer operators for countable Markov maps, see for example the work of Cyr and Sarig \cite{CyrSarig}, and with suitable large deviation theory \cite{JordanSahlsten} for countable Markov maps, one should be able to extend Theorem \ref{thm:nonlinear} for infinite branches. 

Moreover, in higher dimensions it would be trickier to state the assumptions needed for Theorem \ref{thm:nonlinear} and what type of non-linear Cantor sets we could consider. Here one could for example use the symbolic setting of Avila-Gou\"ezel-Yoccoz \cite{AGY} used to characterise total non-linearity for expanding maps on Markov partitions of John domains. Here it is likely that higher dimensional analogues for Bourgain's sum-product estimates to exponential sums are needed, see for example the related work towards this goal \cite{BourgainGamburd,JialunRd,JialunNaudPan}. For the linear case, in \cite{JialunSahlsten2} Fourier decay theorems were proved for self-affine measures using non-commutativity assumption of the linear parts and these ideas were based on quantitative rates for renewal theorems. A similar strategy by proving quantitative renewal theorems is likely to be possible in the commuting case under dense rotations. For example, in the case of self-similar measures under a spectral gap assumption for the random walks on $SO(d)$, for $d \geq 3$, was done by Lindenstrauss and Varj\'u \cite{LV}.  However, establishing rates for renewal theorems on $SO(d)$ is a difficult problem and relies on proving a spectral gap on the random walks on $SO(d)$ under a dense rotations condition, which is only known in the case of algebraic entries by Benoist and Saxc\'e \cite{BenoistSaxce} recently. 

Finally, we conclude with a problem posed to us by S. Dyatlov and P. Shmerkin: 

\begin{question}\label{question} Is the polynomial decay rate in Theorem \ref{thm:nonlinear} is \textit{continuous} in the defining parameters? For example, if a sequence of totally non-linear systems $(T_n)_{n \in \N}$ approaches pointwise the doubling map $x \mapsto 2x \mod 1$ as $n \to \infty$ and we consider the Gibbs measures $\mu_n$ of maximal Hausdorff dimension for $T_n$, so by Theorem \ref{thm:nonlinear}, we have $|\widehat{\mu}(\xi)| = O(|\xi|^{-\alpha_n})$ as $|\xi| \to \infty$, then does $\alpha_n \to 0$ as $n \to \infty$? 
\end{question}

Here it is likely one would need to study the continuity of the spectral gap \cite{Stoyanov} of the transfer operators under perturbations we used for non-concentration estimates, entropy, Lyapunov exponents and the pressure in the large deviation bounds \cite{JordanSahlsten} of $\mu_n$ going back to field of  \textit{linear response} \cite{Ruelle} and the discretised sum-product estimates \cite{Bourgain2010} we use.\\

\textbf{Strategy and organisation of the proof.} We will begin the proof in Section \ref{sec:regularisation} where we first introduce the required thermodynamic notation and symbolic structure following similar notations as in \cite{JordanSahlsten} on Fourier transforms of Gibbs measures for the Gauss map and \cite{bourg} on Patterson-Sullivan measures. Here we will state large deviation results for the distortion function $\tau = \log |T'|$ and Hausdorff dimension for Gibbs measures $\mu$, which allow us to extract a regular part of the measure $\mu$ and decompose quantitatively the Fourier transform $\widehat{\mu}(\xi)$ into regular and irregular components.

For the extracted regular part of $\mu$, in Section \ref{sec:reductionsexponentialsums} we first begin as Bourgain and Dyatlov \cite{bourg} do by adapting bounds of bounded distortions of $T$ to reduce $|\widehat{\mu}(\xi)|^2$ into an exponential sum arising from the distortion function $\tau = \log |T'|$. Then in Section \ref{sec:additive} we see that the exponential sum estimates can be bounded using a consequence of the discretised sum-product theorem established by Bourgain in \cite{Bourgain2010}, and we prove a particular form adapted to our setting that takes into account the multiplicative errors arising from large deviations for $\tau$. However, the assumptions to apply the sum-product estimates require a non-concentration property for the derivatives of $T$, which is possible thanks to total non-linearity of $T$ and an argument using the contraction for the complex transfer operators associated to the potential $\phi - s\tau$ for suitable $s \in \C$ depending on $\xi$ established by Stoyanov \cite{Stoyanov}.

Finally in Section \ref{sec:completion} we complete the proof by carefully choosing the right parameters so that the large deviation estimates, non-concentration estimates and decay theorems for exponential sums are satisfied.

\section{Regular part of the measure and large deviations} \label{sec:regularisation}

\subsection{Symbolic and thermodynamic preliminaries} \label{sec:thermo} Let us collect here all the symbolic notations we will use throughout the proof. Write $\cA = \{1,\dots,N\}$ and let $\cA^*$ the collection of all finite words with alphabet $\cA$. 

\begin{itemize}
\item[(1)] For $\a = (a_1,\dots,a_n) \in \cA^n$, we write $f_\a = T_{a_1}^{-1} \circ \dots \circ T_{a_n}^{-1}$.
\item[(2)] Given
$$ {\bf A }= ({\a}_0,\a_1,...,{\a}_k) \in  (\cA^n)^{k+1} \text{   and   } {\b} = ({\b}_1,...,{\b}_k) \in  (\cA^n)^k $$
define the following concatenation operators:
$${\A}*{\B}:={\a}_0{\b}_1{\a}_1{\b}_2...{\a}_{k-1}{\b}_k{\a}_k \quad \text{and} \quad {\A}\#{\B}:={\a}_0{\b}_1{\a}_1{\b}_2...{\a}_{k-1}{\b}_k.$$
\item[(3)] Write also $I_\a = f_\a([0,1])$, $\a \in \cA^n$, $n \in \N$.
\end{itemize}

Let us now introduce some thermodynamical formalism. Here we refer to \cite{Keller,Sarig} for background on thermodynamic formalism and \cite{Mattila95} on geometric measure theory and Hausdorff dimension. For the rest of this paper, we fix a continuous $\phi : I  = \bigcup_{a = 1}^N I_a\to \R$, $\phi < 0$. The \textit{variation} of $\phi$ at the generation $n$ is defined by
$$\mathrm{var}_n(\phi) := \sup_{\a \in \cA^{n}} \sup\{|\phi(u) - \phi(v)| : u,v \in I_{\a}\}.$$
In this paper we will assume $\phi$ is regular in the sense that $\mathrm{var}_n(\phi) \to 0$ exponentially as $n \to \infty$. We define the \textit{transfer operator} associated to $\phi$ as
$$\cL_\phi g(x) = \sum_{y : T(y) = x} e^{\phi(y)} g(y)$$
for continuous $g : I \to \C$. Note that we consider $T$ which is conjugate to the full shift on the word space $\mathcal{A}^*$, so the transfer operator is defined as such for any $x \in I$. The dual operator $\cL_\phi^*$ acting on the space of measures on $K$ is then defined by the formula
$$\int_K g \, d\cL_\phi^* \mu := \int_K \cL_\phi g \, d\mu.$$
Here we will consider the unique probability measure $\mu = \mu_\phi$ on $K$ satisfying $\cL_\phi^* \mu = \mu$ maximising the pressure formula
$$P(\phi) = \sup\Big\{h_\mu(T) + \int_K \phi \, d\mu : \mu = T\mu \Big\}$$
where $h_\mu(T)$ is the entropy of $\mu$ with respect to $\mu$, with pressure $P(\phi) = 0$ and satisfying the \textit{Gibbs condition}
\begin{align}\label{eq:Gibbs} C^{-1} e^{S_n \phi(f_\a(x)) } \leq \mu(I_\a) \leq C e^{S_n \phi(f_\a(x)) }\end{align}
for all $x \in [0,1]$ and for some $C > 0$, where $S_n \phi(x) := \phi(x) + \phi(Tx) + \dots + \phi(T^{n-1} x)$ is the \textit{Birkhoff sum} of $\phi$. For brevity, we will from now on define the \textit{Birkhoff weights} at $x \in [0,1]$ by
$$w_{\a}(x) := e^{S_n \phi(f_\a(x)) }, \quad \a \in \cA^n$$
so by \eqref{eq:Gibbs} we have $C^{-1} w_\a(x) \leq \mu(I_\a) \leq C w_\a(x)$ for all $x \in I$. We note that it is known, see for example \cite{JordanSahlsten} and the references there-in that one can relax the zero pressure condition but still have the same properties as we claim here.

\subsection{Large deviation estimates for Gibbs measures}

We need to find a large regular part of the measure $\mu$ in terms of the Lyapunov exponent and Hausdorff dimension, which allow us to manipulate the Fourier transforms. In Bourgain and Dyatlov \cite{bourg} they dealt with Patterson-Sullivan measures which automatically are Ahlfors-David regular, which is stronger than the Gibbs condition. Large deviations allow us to extract a ``large part'' of the support with similar Ahlfors-David regular behaviour for $\mu$. Here is also where we need the finite Lyapunov exponent for $\mu$.

\begin{theorem}[Large deviations]\label{thm:largedev}
Let $\mu$ be the equilibrium state associated to $\phi$ with Lyapunov exponent $\lambda = \int \tau \, d\mu > 0$ and Hausdorff dimension $\delta = \Hd \mu > 0$. Then we have that for any $\varepsilon >0$, there exists $\delta_0(\eps) >0$ and $n_1(\varepsilon) \in \N$  such that
		$$ \mu \Big(\Big\{x \in [0,1] : \Big| \frac{1}{n} S_n \tau (x) - \lambda \Big| \geq \varepsilon \quad  \mathrm{ or } \quad  \Big| \frac{S_n \phi (x)}{-S_n \tau (x)} - \delta \Big| \geq \varepsilon \Big\} \Big) \leq e^{-\delta_0(\eps) n} $$	
		 for all $n\geq n_1(\eps)$.
\end{theorem}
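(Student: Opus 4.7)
The strategy is to reduce the joint estimate to two independent level-$1$ large deviation bounds for Birkhoff sums of $\psi$ and $\phi$, and then combine them by a ratio/union-bound argument.

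\emph{Identifying the two means.} By Birkhoff's ergodic theorem, $\frac{1}{n}S_n\psi(x) \to \int \psi \, d\mu = -\lambda$ for $\mu$-a.e.\ $x$. For $\phi$, the variational principle together with $P(\phi) = 0$ gives $h_\mu(T) = -\int \phi \, d\mu$, and the dimension formula $\delta = h_\mu(T)/\lambda$ then yields
\[
\int \phi \, d\mu = -h_\mu(T) = -\delta\lambda,
\]
so $\frac{1}{n}S_n\phi(x) \to -\delta\lambda$ $\mu$-a.e.

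\emph{LDP input.} Under assumptions (1)--(3) the system $(T,I)$ is a uniformly expanding Markov map conjugate to the full shift with bounded distortion, and the potential $\phi$ has exponentially vanishing variations. This is exactly the setup in which Ruelle's transfer operator $\mathcal{L}_\phi$ has a spectral gap on a suitable H\"older/Lipschitz space, and the standard level-$1$ large deviation principle for Gibbs measures (going back to Kifer, Young, Denker--Urba\'nski, and used in the same form in \cite{JordanSahlsten}) applies: for any H\"older-regular observable $F$ with mean $\bar F := \int F \, d\mu$ and every $\eta > 0$, there exists $c = c(\eta, F) > 0$ and $m = m(\eta,F) \in \N$ such that
\[
\mu\left(\left\{ x : \left| \tfrac{1}{n} S_n F(x) - \bar F \right| \geq \eta \right\}\right) \leq e^{-c n} \qquad \text{for } n \geq m.
\]
We apply this with $F = \psi$ (H\"older/Lipschitz thanks to bounded distortion) and with $F = \phi$.

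\emph{Combining the two estimates.} Fix $\varepsilon > 0$. Since $\lambda > 0$, one can choose $\eta = \eta(\varepsilon, \lambda, \delta) \in (0, \lambda/2)$ small enough that the pair of inequalities $|u+\lambda| < \eta$ and $|v + \delta\lambda| < \eta$ imply $|v/u - \delta| < \varepsilon$ (an elementary calculus estimate, using that the denominator stays bounded away from $0$). Applying the LDP to $\psi$ at level $\min(\varepsilon,\eta)$ and to $\phi$ at level $\eta$ yields constants $c_1, c_2 > 0$ and $n_1(\varepsilon) \in \N$ with
\[
\mu\left\{ \left| \tfrac{1}{n} S_n\psi + \lambda \right| \geq \varepsilon \right\} \leq e^{-c_1 n}, \qquad
\mu\left\{ \left| \tfrac{1}{n} S_n\psi + \lambda \right| \geq \eta \right\} \leq e^{-c_1 n}, \qquad
\mu\left\{ \left| \tfrac{1}{n} S_n\phi + \delta\lambda \right| \geq \eta \right\} \leq e^{-c_2 n}
\]
for $n \geq n_1(\varepsilon)$. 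On the complement of the latter two bad sets the choice of $\eta$ forces $|S_n\phi / S_n\psi - \delta| < \varepsilon$, so the set in the theorem is contained in the union of the three bad sets above. A union bound with $\delta_0(\varepsilon) := \tfrac{1}{2}\min(c_1, c_2)$ absorbs the constants and gives the claimed exponential decay.

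\emph{Main obstacle.} The only non-trivial input is the level-$1$ LDP itself, and its applicability in our setting is ensured by the spectral gap for $\mathcal{L}_\phi$, which in turn is standard under assumptions (1)--(3) and exponentially vanishing variations of $\phi$; the rest of the argument is essentially arithmetic. The one place care is needed is that the second deviation statement is for the \emph{ratio} $S_n\phi/S_n\psi$ rather than $\frac{1}{n}S_n\phi$, which is why we must control $S_n\psi$ simultaneously at the fine scale $\eta$, not merely at scale $\varepsilon$.
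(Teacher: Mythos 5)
Your proposal is correct and reconstructs the standard argument behind this result: derive the level-$1$ large-deviation bounds for $S_n\psi$ and $S_n\phi$ from the spectral gap of $\mathcal{L}_\phi$, identify the means $\int\psi\,d\mu=-\lambda$ and $\int\phi\,d\mu=-\delta\lambda$ using $P(\phi)=0$ together with $\delta=h_\mu(T)/\lambda$, and combine via a union bound with an auxiliary scale $\eta<\lambda/2$ keeping the denominator away from zero. The paper does not give its own proof here; it simply cites \cite{JordanSahlsten} (where the same scheme is carried out in the harder countable-alphabet setting), so your argument agrees with the approach the paper relies on.
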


The proof of the above in this form was given in \cite[Corollary 4]{JordanSahlsten} for the Gauss map $x \mapsto 1/x \mod 1$ on countable alphabets, where a tail assumption for $\mu$ is imposed in terms of the behaviour of $\tau$ at the tail. However, the proof works for any Markov map with the condition (1), (2) and (3) we assume in the introduction and the equilibrium states $\mu = \mu_\phi$ associated to $T$ with exponentially vanishing variations. In the finite alphabets we also do not have a tail, so we can apply \cite[Corollary 4]{JordanSahlsten} in this form.

\subsection{Regular words $\cR_n(\eps)$ and regular blocks $\cR_n^k(\eps)$}

Let us now use the large deviations to construct regular words and blocks of words that we will use in our analysis of the Fourier transform of $\mu$. Fix now $\eps > 0$ and $n \in \N$. Write
$$A_n(\eps) := \Big\{x \in [0,1] : \Big| \frac{1}{n} S_n \tau (x) - \lambda \Big| < \varepsilon \quad  \mathrm{ and } \quad  \Big| \frac{S_n \phi (x)}{-S_n \tau (x)} - \delta \Big| < \varepsilon \Big\}$$

\begin{definition}[Regular words and blocks]
Fix $\eps > 0$ and $\eps_0 > 0$.
\begin{itemize}
\item[(1)] For a generation $n \in \N$ the set of \textit{regular words} of length $n$:
$$\cR_n(\eps) = \cR_n(\eps,\eps_0) := \bigcap\limits_{j=\lfloor \eps_0 n \rfloor}^n \{ {\a} \in \cA^n : I_{{\a|_j}} \subset A_j(\varepsilon) \}$$
Note that unlike \cite{JordanSahlsten}, we will require $\lfloor \eps_0 n \rfloor$-regularity as opposed to $\lfloor n/2 \rfloor$. We note that the $\eps_0$ is the exponent $\eps_0 > 0$ for the non-concentration for distortions we will eventually find in Lemma \ref{thm:nonconc} using total non-linearity of $T$. It does not depend on $\eps$ so we will suppress it from the notations until we will need it.
\item[(2)] For a generation $n \in \N$ and parameter $k \in \N$, define a \textit{regular block} $\A = \a_1 \dots \a_k$ of length $k$ to be the concatenation of $k$ regular words $\a_j \in \cR_n(\eps)$, $j = 1,\dots,k$, of length $n$. We denote the set of such regular blocks by $\mathcal{R}_n^{k}(\eps) = \cR_n^k(\eps,\eps_0)$.
	\end{itemize}
\end{definition}
For $k \in \N$, we shall consider the corresponding geometric points to be 
	$$R_n^k(\eps) := \bigcup_{\A \in \mathcal{R}_n^{k}(\eps)}I_{\A} \subset [0,1] \quad \text{and} \quad R_n(\eps) := R_n^1(\eps)$$
In the next lemma we will define the basic properties of the regular words and blocks, which we will use throughout the proof. In particular this allows us to control the lengths and $\mu$ measures of the intervals associated to the words and blocks. Thanks to the large deviation principle we know ``most'' words in terms of $\mu$ mass satisfy these properties.
	
\begin{lemma} \label{lma:regularmarkov}
	For any $j \in \N$ define the exponential number 
	$$C_{\varepsilon,j}:= e^{\varepsilon j}$$ 
	and assume that $n$ is chosen large enough so that
	$$\frac{\log 4}{\eps_0 n}<\varepsilon/2, \,\,\,\,\, \frac{\log 4C^2}{\log(\gamma^{2 \eps_0n})}<\varepsilon /2 \text{\,\,\,\,\,\and\,\,\,\,\,} \frac{e^{-\eps_0 \eps_0 n}}{1-e^{-\eps_0}} < e^{-\eps_0 \eps_0 n/2},$$
	where $\gamma > 1$ satisfies $|(T^n)'(z)| \geq C \gamma^n$ for all $n \in \N$ and $z \in I$.
	For some $n$-regular word ${\a}\in \mathcal{R}_n(\eps)$ and $j \in \{ \lfloor \eps_0 n \rfloor,..., n \}$ we have that the following hold:
	\begin{itemize}
		\item[(i)]
		the size of the derivative $|f'_{{\a|_j}}|$ satisfies
		$$ \frac{1}{16} C_{\varepsilon,j}^{-1}e^{-\lambda j} \leq |f'_{{\a|_j}}| \leq C_{\varepsilon,j}e^{-\lambda j}$$
		and hence so does the length $|I_{{\a|_j}}|$;
		\item[(ii)] The measure satisfies
		$$ C^{-1}\cdot C_{\varepsilon,j}^{-3\lambda}e^{-s\lambda j} \leq \mu(I_{{\a|_j}}) \leq C\cdot C_{\varepsilon,j}^{3\lambda}e^{-s\lambda j};$$
		\item[(iii)] For all $x \in [0,1]$, the Birkhoff weights $w_{\a|_j}(x) = e^{S_j \phi(f_{\a|j}(x)) }$ satisfy
		$$ C_{\varepsilon,j}^{-3\lambda}e^{-s\lambda j} \leq w_{\a|_j}(x) \leq C_{\varepsilon,j}^{3\lambda}e^{-s\lambda j}. $$
		\item[(iv)] The cardinality
		$$\tfrac{1}{2}C^{-1 }C_{\eps,n}^{-3\lambda} e^{\lambda s n} \leq \sharp \cR_n(\eps) \leq C C_{\eps,n}^{3\lambda} e^{\lambda s n}$$
	\end{itemize}
Then for $k \in \N$ we have that if $n \to \infty$,
	$$ \mu ([0,1] \setminus R_n^k(\eps)) = O(e^{-\delta_0(\eps/2) \eps_0 n/2}) $$
	where $\delta_0 (\varepsilon /2)$ is given to us Theorem \ref{thm:largedev}.
\end{lemma}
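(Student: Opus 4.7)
The plan is to derive (i)--(iii) directly from the definition of $\cR_n(\eps)$ by combining bounded distortion with the Gibbs inequality \eqref{eq:Gibbs}; to obtain (iv) by a counting argument using that $\mu(R_n(\eps)) \to 1$; and to prove the final measure estimate by a union bound over the $k$ blocks combined with Theorem \ref{thm:largedev}.

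First I would prove (i). If $\a \in \cR_n(\eps)$ and $j \in [\lfloor \eps_0 n\rfloor, n]$, then by definition $I_{\a|_j} \subset A_j(\eps)$, so any $y \in I_{\a|_j}$ satisfies $|S_j\psi(y)/j + \lambda| < \eps$, giving $e^{-(\lambda+\eps)j} < e^{S_j\psi(y)} < e^{-(\lambda-\eps)j}$. Bounded distortion (condition (3)) makes $|f'_{\a|_j}(x)|$ comparable to $e^{S_j\psi(f_{\a|_j}(x))} = 1/|(T^j)'(f_{\a|_j}(x))|$ up to a uniformly bounded multiplicative constant, which the smallness assumptions on $n$ stated in the lemma let us absorb into the $\tfrac{1}{16}$ factor. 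The length bound on $|I_{\a|_j}|$ then follows from the mean value theorem. For (iii), the second defining condition of $A_j(\eps)$ gives $|S_j\phi(y) - s\,S_j\psi(y)| \leq \eps|S_j\psi(y)|$ with $s = \delta$; combined with the $S_j\psi$ estimate from (i), this yields
\begin{align*}
|S_j\phi(y) + s\lambda j| \leq \eps(\lambda+\eps)j + s\eps j \leq 3\lambda\eps j
\end{align*}
for $\eps$ small enough, which is (iii) upon taking $y = f_{\a|_j}(x)$. Item (ii) would then follow from (iii) and the Gibbs inequality \eqref{eq:Gibbs}, with the Gibbs constant $C$ absorbed into the $C_{\eps,j}^{\pm 3\lambda}$ factors.

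For the measure estimate I would observe that $x \notin R_n^k(\eps)$ iff at least one of the $k$ length-$n$ blocks of the coding of $x$ fails to be a regular word. By $T^n$-invariance of $\mu$ and a union bound,
\begin{align*}
\mu([0,1] \setminus R_n^k(\eps)) \leq k \cdot \mu([0,1] \setminus R_n(\eps)) \leq k \sum_{j=\lfloor \eps_0 n\rfloor}^n \mu(\{x : I_{\a|_j(x)} \not\subset A_j(\eps)\}).
\end{align*}
Exponentially vanishing variations of $\phi$ together with bounded distortion of $\tau$ imply that the oscillations of $S_j\psi/j$ and $S_j\phi/S_j\psi$ over any length-$j$ cylinder tend to $0$ with $j$, so for $j \geq \lfloor \eps_0 n\rfloor$ each set above is contained in $[0,1] \setminus A_j(\eps/2)$; Theorem \ref{thm:largedev} then bounds its measure by $e^{-\delta_0(\eps/2) j}$. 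Summing the geometric series starting at $\lfloor \eps_0 n\rfloor$ and using the third smallness hypothesis on $n$ in the lemma (which swallows the factor $k$ and the denominator $1 - e^{-\delta_0(\eps/2)}$) gives the claimed $O(e^{-\delta_0(\eps/2)\eps_0 n/2})$ bound. Finally (iv) will follow from (ii): the upper bound comes from $\sum_{\a \in \cR_n(\eps)} \mu(I_\a) \leq 1$, and the lower bound from $\mu(R_n(\eps)) \geq 1/2$ for $n$ large, which itself follows from the measure estimate just proved.

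The main technical obstacle is the step upgrading the pointwise conditions given by Theorem \ref{thm:largedev} to the cylinder-wise condition $I_{\a|_j} \subset A_j(\eps)$ required by the definition of regular words; this is precisely where the $\eps/2$ margin is introduced and where the smallness hypotheses on $n$ are used to absorb the constants coming from bounded distortion and from the exponentially decaying variations of $\phi$.
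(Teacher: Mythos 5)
Your proposal is correct and follows essentially the same route as the paper: you derive (i)--(iii) directly from the defining inclusion $I_{\a|_j} \subset A_j(\eps)$ together with the chain rule, bounded distortion and the Gibbs inequality, obtain (iv) by a pigeonhole argument combining (ii) with the measure estimate, and prove the measure estimate by a union bound over the $k$ blocks and the $\eps/2$-margin argument that upgrades the pointwise large-deviations bound of Theorem~\ref{thm:largedev} to the cylinder-wise condition $I_{\a|_j}\subset A_j(\eps)$. The paper cites \cite{JordanSahlsten} for (i)--(iii) and for the final summation of the geometric series, but the underlying argument is identical to yours.
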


\begin{proof} Parts (i), (ii), and (iii) are done in \cite[Lemma 5]{JordanSahlsten} and the part (iv) follows from the bounds for $\mu(I_\a)$ and combining with the measure bound for $\mu ([0,1] \setminus R_n(\eps))$. 

For the upper bound of $\mu ([0,1] \setminus R_n^k(\eps))$, it is sufficient to prove that
	$$ \bigcap\limits_{i=0}^{k-1} (T^{-1})^{ni} \Big( \bigcap\limits_{j=\lfloor \eps_0 n \rfloor}^n A_j(\varepsilon /2) \Big) \subset R_n^k(\eps) $$
	since we have that
	\begin{align*}
	\mu([0,1] \setminus R_n^k(\eps)) &\leq \mu \Big([0,1] \setminus \bigcap\limits_{i=0}^{k-1} (T^{-1})^{ni} \Big( \bigcap\limits_{j=\lfloor \eps_0 n \rfloor}^n A_j(\varepsilon /2) \Big) \Big)\Big)\\
	& \leq \sum\limits_{i=0}^{k-1} \mu \Big( [0,1] \setminus  (T^{-1})^{ni} \Big( \bigcap\limits_{j=\lfloor \eps_0 n \rfloor}^n A_j(\varepsilon /2) \Big)\Big) \\
	&\leq k\mu \Big( [0,1] \setminus \Big( \bigcap\limits_{j=\lfloor \eps_0 n \rfloor}^n A_j(\varepsilon /2) \Big) \Big) 
	\\
	&\leq ke^{-\delta_0(\eps/2) \eps_0 n/2}
	\end{align*}
	where the details of the last inequality are given in \cite[Lemma 6]{JordanSahlsten}. 
	
	We now prove the claim. Let $\B \in (\cA^n)^k$ be a word such that $T^{ni}f_{\B}x \in A_j(\varepsilon /2)$ for all $i = 0,1,...,k-1$ and all $j=\lfloor \eps_0 n \rfloor,...,n$. We want to prove that $f_{\B}x \in R_n^k(\eps)$. By definition of $R_n^k(\eps)$, it is enough for us to prove that $f_{\B}x \in I_{\A}$ for some $\A \in \mathcal{R}_n^{k}(\eps)$. So we can just prove that $\B \in \mathcal{R}_n^{k}(\eps)$. By definition of $\mathcal{R}_n^{k}(\eps)$, we need to prove that $I_{(\sigma^n)^i \B|_j} \subset A_j (\varepsilon)$ for all $i= 0,1,...,k-1$ and $j=\lfloor \eps_0 n \rfloor,...,n$. If we have $y \in [0,1]\setminus K$, then $f_{(\sigma^n)^i\B|_j}y$ is a general point in $I_{(\sigma^n)^i\B|_j}$ (we may equivalently consider the point $T^{ni}f_{\B|_j}y$). So we want to prove that $f_{(\sigma^n)^i \B|_j}y \in A_j(\varepsilon)$. Using the assumptions on $\B$ we have that
	\begin{align*}  \Big| \frac{1}{j}S_j\tau (f_{(\sigma^n)^i\B|_j}y)-\lambda \Big| &\leq \Big| \frac{1}{j}S_j\tau (f_{(\sigma^n)^i\B|_j}y) - \frac{1}{j}S_j\tau (f_{(\sigma^n)^i\B}x) \Big| + \frac{\varepsilon}{2} \\
	& = \frac{\varepsilon}{2} + \frac{1}{j}\log \frac{|f'_{(\sigma^n)^i\B|_j}(f_{(\sigma^n)^{i+1}\B}y)|}{|f'_{(\sigma^n)^i\B|_j}(f_{(\sigma^n)^{i+1}\B}x)|}\\
	&\leq \frac{\varepsilon}{2}+\frac{\log 4}{j}\\
	&\leq \varepsilon
	\end{align*}
	by choice of $n$. Now for the second condition we see that
	\begin{align*}
	 \Big| \frac{S_j\phi(f_{(\sigma^n)^i\B|_j}y)}{-S_j\tau(f_{(\sigma^n)^i\B|_j}y)}-\delta \Big| & \leq \Big| \frac{S_j\phi(f_{(\sigma^n)^i\B|_j}y)}{-S_j\tau(f_{(\sigma^n)^i\B|_j}y)} - \frac{S_j\phi(f_{(\sigma^n)^i\B}x)}{-S_j\tau(f_{(\sigma^n)^i\B}x)} \Big| + \frac{\varepsilon}{2}\\
	 & \leq \frac{\log 4C^2}{\log (c\gamma^{2k})}+ \frac{\varepsilon}{2} \\
	 &< \varepsilon
	 \end{align*}
	 as in the proof of \cite[Lemma 6]{JordanSahlsten}.
\end{proof}

\section{Reduction to exponential sums}\label{sec:reductionsexponentialsums}

For a regular word ${\a}$, we define $x_{\a}\in I_{\a}$ to be the center point of this construction interval, recall $I_\a = f_\a ([0,1])$, where $T : I \to \R$ is the expanding map. For a block ${\bf A }= ({\a}_0,,...,{\a}_k) \in \cR_n^{k+1}(\eps)$ and an index $j \in \{1,\dots,k\}$, we define the map $\zeta_{j,\A} : \cR_n(\eps) \to \R$ by
$$ \zeta_{j,{\A}}({\b}) := e^{2\lambda n}f_{{\a_{j-1}\b}}'(x_{{\a_j}}), \quad {\b} \in \mathcal{R}_n(\eps).$$
Given $\xi \in \R$, we will now reduce $|\widehat{\mu}(\xi)|^2$ into an estimate consisting of exponential sums consisting of products of the combinatorial functions $ \zeta_{j,{\A}}({\b})$ involving derivatives of $T$.

Below will write $|\xi| \sim N$ means that there exists a constant $c > 0$ such that $c^{-1} N \leq |\xi| \leq cN$ and that $A \lesssim_\mu B$ means that $A \leq C_\mu B$ for some constant $C_\mu > 0$ depending on $\mu$. 

\begin{prop}\label{lma:exponentialsum}  Fix $\eps > 0$,  $\xi \in \R$, $\eps_0 > 0$, $k \in \N$ and $n \in \N$ and write
	$$J_n(\eps) = J_n(\eps,\eps_0) := \{ \eta \in \mathbb{R}:e^{\eps_0 n/2} \leq |\eta| \leq C_{\varepsilon,n} e^{\eps_0 n}  \},$$
	where recall $C_{\eps,n} = e^{\eps n}$. Then whenever $|\xi| \sim e^{(2k+1) n \lambda} e^{\eps_0 n}$ we can bound
	\begin{align*} | \widehat{\mu}(\xi)| ^2 \lesssim_\mu \,\,&C_{\varepsilon,n}^{(2k+1)\lambda}e^{-\lambda (2k+1)\delta n} \sum\limits_{{\A} \in \cR_n^{k+1}(\eps)} \sup\limits_{\eta \in J_n(\eps)} \Big| \sum\limits_{{\B \in \cR_n^k(\eps)}} e^{2\pi i \eta \zeta_{1,{\A}}({\b_1})...\zeta_{k,{\A}}({\b}_k)} \Big|  \\
	& + e^{2k}C_{\varepsilon,n}^{k+2} e^{-\lambda n}e^{\eps_0 n} +\mu([0,1]\setminus R_n^{k+1}(\eps))^2+ C_{\varepsilon,n}^{4k\lambda} \rho^{2n} + \mu(R_n(\eps)^c)+C_{\varepsilon,n}^2e^{- \delta \eps_0 n/2}.
	\end{align*}
Here $0 < \rho < 1$ comes from the variations of $\phi$: $\mathrm{var}_n(\phi) = O(\rho^n)$ as $n \to \infty$.
\end{prop}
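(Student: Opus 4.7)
The plan is to use $T^{(2k+1)n}$-invariance of $\mu$ to expand $\widehat\mu(\xi)$ as a sum indexed by words in $\cA^{(2k+1)n}$, restrict to regular blocks $\A * \B$, linearise each composition via bounded distortion so that the $\B$-dependence of the phase becomes the scalar product $\prod_j \zeta_{j,\A}(\b_j)$, and finally apply a triangle/Cauchy--Schwarz inequality to isolate the claimed exponential sum. Using $\cL_\phi^* \mu = \mu$ iteratively, I would first write
\[
\widehat\mu(\xi) = \int \cL_\phi^{(2k+1)n}\bigl(e^{-2\pi i \xi \,\cdot}\bigr)(x)\, d\mu(x) = \sum_{\a \in \cA^{(2k+1)n}} \int w_\a(x)\, e^{-2\pi i \xi f_\a(x)}\, d\mu(x),
\]
and restrict the outer sum to $\a \in \cR_n^{2k+1}(\eps)$. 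Any such $\a$ decomposes uniquely as $\A * \B$ with $\A = (\a_0,\dots,\a_k) \in \cR_n^{k+1}(\eps)$ and $\B = (\b_1,\dots,\b_k) \in \cR_n^k(\eps)$, since every $n$-block of $\a$ lies in $\cR_n(\eps)$. By Lemma~\ref{lma:regularmarkov} and Theorem~\ref{thm:largedev}, the complementary sum contributes errors whose squared modulus matches the terms $\mu(I\setminus R_n^{k+1}(\eps))^2$, $\mu(I\setminus R_n(\eps))$, and $e^{-\eps_2 n}$.

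For each regular block I would exploit the grouping
\[
f_{\A * \B} = f_{\a_0\b_1} \circ f_{\a_1\b_2} \circ \cdots \circ f_{\a_{k-1}\b_k} \circ f_{\a_k},
\]
so that the chain rule yields $f'_{\A*\B}(x) = f'_{\a_k}(x)\prod_{j=1}^{k} f'_{\a_{j-1}\b_j}(y_j)$ with $y_j \in I_{\a_j}$. Bounded distortion (condition (3)) lets me replace each $f'_{\a_{j-1}\b_j}(y_j)$ by $f'_{\a_{j-1}\b_j}(x_{\a_j})$ at a uniformly bounded multiplicative cost, and after the $e^{2\lambda n}$ normalisation from Lemma~\ref{lma:regularmarkov}(i) these are exactly the quantities $\zeta_{j,\A}(\b_j)$. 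Taylor-expanding the phase $\xi f_{\A*\B}(x)$ about a reference point $x_0 \in I$, the second-order remainder is bounded by $|\xi|\,\|f''_{\A*\B}\|_\infty \cdot \diam(I)^2$; the iterated estimate $\|f''_{\A*\B}\|_\infty \lesssim_B \|f'_{\A*\B}\|_\infty^2$ together with $\|f'_{\A*\B}\|_\infty \asymp C_{\eps,n}^{2k+1} e^{-(2k+1)\lambda n}$ and the chosen window $|\xi| \sim e^{(2k+1)\lambda n+\eps_0 n}$ makes this residue $O\bigl(e^{2k} C_{\eps,n}^{k+2} e^{(\eps_0-\lambda)n}\bigr)$, matching the error term $e^{2k}C_{\eps,n}^{k+2} e^{-\lambda n} e^{\eps_0 n}$.

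Setting $\eta_\A := \xi\, f'_{\a_k}(x_0)\, e^{-2k\lambda n}$, the linearised phase reads
\[
\xi f_{\A*\B}(x) \approx \xi f_{\A*\B}(x_0) + \eta_\A\,\zeta_{1,\A}(\b_1)\cdots \zeta_{k,\A}(\b_k)\cdot(x-x_0),
\]
and Lemma~\ref{lma:regularmarkov}(i) combined with the range of $|\xi|$ places $|\eta_\A| \in J_n(\eps)$. Iterating the Gibbs condition~\eqref{eq:Gibbs} at scale $n$ over the $2k+1$ regular blocks of $\A*\B$ gives $w_{\A*\B}(x) \asymp C_{\eps,n}^{O(\lambda)(2k+1)}\, e^{-\lambda\delta(2k+1)n}$ uniformly in $x$, so the weight comes out of the integral. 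The $\A$-and-$\B$-dependent zeroth-order phase $e^{-2\pi i\xi f_{\A*\B}(x_0)}$ has unit modulus and drops under absolute values. Using $|\widehat\mu(\xi)|^2 \le |\widehat\mu(\xi)|$ (since $|\widehat\mu|\le 1$), the triangle inequality in $\A$ together with Lemma~\ref{lma:regularmarkov}(iv) to control the cardinality, and $\sup_{\eta\in J_n(\eps)}$ to absorb the $\A$-dependence of $\eta_\A$, then leave the stated exponential sum with the prefactor $C_{\eps,n}^{(2k+1)\lambda}\, e^{-\lambda(2k+1)\delta n}$; the residue $C_{\eps,n}^2\, e^{-\delta\eps_0 n/2}$ accounts for integrating the linear factor $(x-x_0)$ against $d\mu$ at the innermost $n$-scale.

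The principal hurdle I expect is the joint management of the zeroth-order phase $e^{-2\pi i\xi f_{\A*\B}(x_0)}$ and of the $x$-dependent residue $(x-x_0)$ after linearisation: neither factors cleanly through $\prod_j \zeta_{j,\A}(\b_j)$, and extracting the clean exponential sum relies on the outer absolute value and the $\sup_{\eta\in J_n(\eps)}$ absorbing these extra $\A$- and $x$-dependences without destroying the oscillation that the later sum-product estimates will exploit. A secondary subtlety is the compounded multiplicative distortion across the $k$-fold grouping --- which is the source of the $e^{2k}C_{\eps,n}^{k+2}$ factor in the linearisation error --- and the uniform verification that $\eta_\A \in J_n(\eps)$ as $\A$ ranges over $\cR_n^{k+1}(\eps)$, which uses both sides of the derivative bound in Lemma~\ref{lma:regularmarkov}(i) applied to the terminal coordinate $\a_k$.
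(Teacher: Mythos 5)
Your outline gets the opening steps right (iterating $\cL_\phi^*$, restricting to $\cR_n^{k+1}(\eps)\times\cR_n^k(\eps)$, using bounded distortion and the chain rule to express derivatives along a regular block through the $\zeta_{j,\A}(\b_j)$), but the central mechanism that makes the reduction work is missing, and the workaround you propose does not close the gap. You linearise the phase $\xi f_{\A*\B}(x)$ about a single reference point $x_0$, obtaining a zeroth-order term $\xi f_{\A*\B}(x_0)$ plus a derivative term. You then say this zeroth-order phase ``has unit modulus and drops under absolute values'' and that $|\widehat\mu(\xi)|^2\le|\widehat\mu(\xi)|$ suffices. That fails: the phase $e^{-2\pi i\xi f_{\A*\B}(x_0)}$ depends on $\B$, so it sits \emph{inside} the sum over $\B$ and cannot be removed before that sum is performed; pulling it out would require taking absolute values inside the $\B$-sum, which destroys exactly the cancellation that the exponential sum $\big|\sum_{\B} e^{2\pi i\eta\,\zeta_{1,\A}(\b_1)\cdots\zeta_{k,\A}(\b_k)}\big|$ is supposed to capture. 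Moreover $f_{\A*\B}(x_0)$ is a position, not a derivative, and has no useful algebraic relation to the product $\prod_j\zeta_{j,\A}(\b_j)$, so no reindexing of $\eta$ absorbs it. The $\sup_{\eta\in J_n(\eps)}$ cannot help either, since $\eta$ in your scheme is $\A$-dependent but not $\B$-dependent.

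The paper resolves this precisely by \emph{squaring}: after Cauchy--Schwarz one expands $|\cdot|^2$ into a double integral in $(x,y)$ so the relevant phase is the \emph{difference} $\xi\bigl(f_{\A*\B}(x)-f_{\A*\B}(y)\bigr)$. By the fundamental theorem of calculus this equals $\xi\int_{\hat x}^{\hat y} f'_{\A\#\B}(t)\,dt$ (with $\hat x=f_{\a_k}(x)$, $\hat y=f_{\a_k}(y)$), and bounded distortion then replaces the integrand by the $t$-independent product $e^{-2k\lambda n}\zeta_{1,\A}(\b_1)\cdots\zeta_{k,\A}(\b_k)$, so the whole phase becomes $\eta(x,y)\,\zeta_{1,\A}(\b_1)\cdots\zeta_{k,\A}(\b_k)$ with $\eta(x,y)=\xi e^{-2k\lambda n}(\hat x-\hat y)$ and no residual $\B$-dependent constant. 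The constraint $\eta(x,y)\in J_n(\eps)$ is then enforced by excluding the set where $|x-y|\lesssim C_{\eps,n}e^{-\eps_0 n/2}$, and the measure of that excluded set is exactly the source of the errors $\mu(I\setminus R_n(\eps))+C_{\eps,n}^2 e^{-\delta\eps_0 n/2}$ (not, as you suggest, an estimate on $\int(x-x_0)\,d\mu$). So the $\sup_{\eta\in J_n(\eps)}$ in the statement is there because $\eta$ ranges over values of $\eta(x,y)$ as $(x,y)$ varies, not to absorb an $\A$-dependence. In short: the missing idea is to make the phase a difference by working with $|\widehat\mu(\xi)|^2$ as a double integral, not to approximate $|\widehat\mu|$ directly by a one-sided Taylor expansion.
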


\begin{remark}
\begin{itemize}
\item[(1)] Let us mention a few words about the quantities $\eps_0 > 0$ and $k \in \N$ appearing in Proposition  \ref{lma:exponentialsum}. These will be chosen later and they come from the parameters in our sum-product bounds. The value $\eps_0 > 0$, which is coming from non-concentration of derivatives eventually, is fixed in Remark \ref{rmk:parameters}(2) and the number $k \in \N$ coming from the sum-product estimates and it depends on $\eps_0$ is fixed in Lemma \ref{lma:newexponential}. We will use these to choose the generation $n$ using $\xi$, where we want to form our regular blocks $\cR_{n}^k(\eps)$ and the decomposition of the Fourier transform $\widehat{\mu}(\xi)$ using exponential sums.
\item[(2)] Proposition \ref{lma:exponentialsum} in some sense captures all the quantities that could influence the Fourier decay, and in particular here if one would like to study the \textit{continuity} of the parameters (recall Question \ref{question}), one would need to make this estimate more quantitative. In particular we will see that the main exponential sum term will be controlled by the sum-product estimates that gives the $k \in \N$, which we can use under suitable non-concentration condition that depends on $\eps_0 > 0$, which we will later see coming from total non-linearity using a spectral gap argument. Then the rest of the terms depend on the variations decay rate $0 < \rho < 1$ of the potential $\phi$: $\mathrm{var}_n(\phi) = O(\rho^{n})$, as $n \to \infty$, large deviation bounds rates $\mu([0,1]\setminus R_n^{k+1}(\eps))$ and $\mu(I\setminus R_n(\eps))$, which depend on the shape of the pressure function defined by $\phi$ and $\tau$, and the Hausforff dimension $\delta = \Hd \mu$ and Lyapunov exponent $\lambda = \int \log |T'| \, d\mu$ of $\mu$ with respect to $T$.
\end{itemize}
\end{remark}

Let us now proceed to prove Proposition \ref{lma:exponentialsum}. First we will first use the $\cL_\phi^*$ invariance of $\mu$ to obtain the following estimate. Here, recall the concatenation notation
$${\A}*{\B} ={\a}_0{\b}_1{\a}_1{\b}_2...{\a}_{k-1}{\b}_k{\a}_k \in (\cA^{n})^{2k+1}$$
for the blocks
$$ {\bf A }= ({\a}_0,...,{\a}_k) \in  (\cA^n)^{k+1} \text{   and   } {\b} = ({\b}_1,...,{\b}_k) \in  (\cA^n)^k.  $$

\begin{lemma} \label{lma:FourierReg} For all $\eps > 0$, $\xi \in \R$, $n \in \N$ and $k \in \N$, we have
	$$|\widehat{\mu}(\xi)|^2 \lesssim_\mu \Big|\sum\limits_{\substack{\A \in \mathcal{R}_n^{k+1}(\eps)}} \sum_{ \B \in \mathcal{R}_n^{k}(\eps)} \int e^{-2\pi i \xi f_{\A \ast \B}(x)} w_{\A \ast\B}(x) \, d\mu(x)\Big|^2 + \mu([0,1]\setminus R_n^{k+1}(\eps))^2,$$
	where $w_{\A \ast \B}(x) = e^{S_{(2k+1)n} \phi (f_{\A \ast \B}(x))}$ for $\A \in  (\cA^n)^{k+1}$ and $\B \in (\cA^n)^k$.
\end{lemma}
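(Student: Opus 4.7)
The plan is to exploit the invariance $\cL_\phi^* \mu = \mu$ by applying the identity $\int g\, d\mu = \int \cL_\phi^n g\, d\mu$ iteratively $2k+1$ times to $g(x) = e^{-2\pi i \xi x}$, each application introducing one fresh length-$n$ summation index. If I label these indices in the order they appear as $\a_0, \b_1, \a_1, \b_2, \dots, \b_k, \a_k$, then the cocycle identity $w_{\a\c}(x) = w_\a(f_\c(x)) w_\c(x)$ together with $f_{\a\c}(x) = f_\a(f_\c(x))$ assembles the weights and compositions into $w_{\A \ast \B}(x)$ and $f_{\A \ast \B}(x)$ respectively, matching precisely the definition of the interleaving operator $\ast$. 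This yields
\[
\widehat{\mu}(\xi) = \sum_{\A \in (\cA^n)^{k+1}} \sum_{\B \in (\cA^n)^k} \int e^{-2\pi i \xi f_{\A \ast \B}(x)} w_{\A \ast \B}(x) \, d\mu(x).
\]

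Next I would split this double sum into the good part, with $\A \in \cR_n^{k+1}(\eps)$ and $\B \in \cR_n^k(\eps)$, and the bad remainder $S_{\mathrm{bad}}$. Using $|e^{-2\pi i \xi f_{\A \ast \B}(x)}| \leq 1$ and the Gibbs estimate $w_{\A \ast \B}(x) \leq C\mu(I_{\A \ast \B})$, the bad remainder reduces to bounding $\sum_{\mathrm{bad}} \mu(I_{\A \ast \B})$. Covering $\{\text{bad}\} \subseteq \{\A \notin \cR_n^{k+1}\} \cup \{\B \notin \cR_n^k\}$ by a union bound and summing the free index first, for instance $\sum_\B \mu(I_{\A \ast \B}) = \mu\bigl(\bigcap_{i=0}^k T^{-2in} I_{\a_i}\bigr)$, the bad mass becomes the $\mu$-measure of the event ``some odd (resp.\ even) position $n$-block of $x$ is not regular''. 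A further union bound over the $k+1$ (resp.\ $k$) positions combined with the $T$-invariance of $\mu$ collapses each single-position event onto $\mu(I \setminus R_n(\eps))$, giving
\[
|S_{\mathrm{bad}}| \lesssim_\mu \mu(I \setminus R_n(\eps)) \leq \mu([0,1] \setminus R_n^{k+1}(\eps)),
\]
by the trivial inclusion $R_n^{k+1}(\eps) \subseteq R_n(\eps)$. Finally $|\widehat{\mu}(\xi)|^2 \leq 2|S_{\mathrm{good}}|^2 + 2|S_{\mathrm{bad}}|^2$ yields the claim.

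The main technical point to watch is precisely this passage from bad mass to $\mu([0,1]\setminus R_n^{k+1}(\eps))$: a naive bound yields the weaker $\mu([0,1]\setminus R_n^{2k+1}(\eps))$ that would come from demanding every one of the $2k+1$ blocks be regular. The sharper form appears only once one groups the bad cylinders by the $\A$-pattern (resp.\ $\B$-pattern) separately, uses $T$-invariance to trivialise each single-block failure event to a copy of $T^{-2in}(I \setminus R_n(\eps))$, and absorbs the harmless combinatorial factor $2k+1$ (with $k$ fixed) into the $\lesssim_\mu$-constant.
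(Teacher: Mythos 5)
Your proof follows essentially the same route as the paper's: expand $\widehat{\mu}(\xi) = \int (\cL_\phi^n)^{2k+1}h\,d\mu$, split the resulting double sum by regularity of $\A$ and $\B$, bound the irregular part using $|h|\leq 1$ and the Gibbs comparison $w_{\A*\B}(x)\lesssim_\mu \mu(I_{\A*\B})$, and reduce to the $\mu$-measure of an irregularity event before applying $|a+b|^2\leq 2|a|^2+2|b|^2$. The only cosmetic difference is in the final bookkeeping: you union-bound over all $2k+1$ block positions and use $T$-invariance to collapse to $(2k+1)\mu(I\setminus R_n(\eps))$ before invoking $R_n^{k+1}(\eps)\subset R_n(\eps)$, whereas the paper groups the irregularity event into the $\A$-positions and $\B$-positions to land directly on $\mu([0,1]\setminus R_n^{k+1}(\eps))+\mu([0,1]\setminus R_n^k(\eps))$ and then uses $R_n^{k+1}(\eps)\subset R_n^k(\eps)$, which avoids the $k$-dependent factor; since $k$ is ultimately fixed by the sum-product lemma, both versions suffice for the application.
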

\begin{proof}
Given $\xi \in \R$, write
$$h(x) := e^{-2\pi i \xi x}, \quad x \in \R.$$ 
Since $\mu = \cL_\phi^* \mu$, we have
	$$\widehat{\mu}(\xi) = \int h(x) \, d\mu(x) = \int \cL_\phi^{(2k+1)n} h(x) \, d\mu(x) = \int (\cL_\phi^{n})^{2k+1} h(x) \, d\mu(x).$$
	By the definition of the transfer operator
$$ (\cL_\phi^n)^{2k+1} h(x) = \sum\limits_{\mathbf{C} \in (\cA^n)^{2k+1}} w_{\mathbf{C}}(x)h(f_{\mathbf{C}}x) =  \sum\limits_{\substack{\A \in (\cA^n)^{k+1} \\ \B \in (\cA^n)^k}}w_{\A * \B}(x)h(f_{\A * \B}x).$$
	This splits using $\cR_n^k$ and $(\cA^n)^k \setminus \cR_n^k$ to
	$$ \sum\limits_{\substack{\A \in \mathcal{R}_n^{k+1}(\eps), \\ \B \in \mathcal{R}_n^{k}(\eps)}}w_{\A * \B}(x)h(f_{\A * \B}x)  + \sum\limits_{\substack{\A \in (\cA^n)^{k+1}\setminus \mathcal{R}_n^{k+1}(\eps) \\ \text{or }\B \in (\cA^n)^k\setminus \mathcal{R}_n^{k}(\eps)}}w_{\A * \B}(x)h(f_{\A * \B}x) . $$
	Integrating over $x$, we can bound the modulus of the right-hand side by
	\begin{align*}
	\Big| \int \sum\limits_{\substack{\A \in (\cA^n)^{k+1}\setminus \mathcal{R}_n^{k+1}(\eps) \\ \text{or }\B \in (\cA^n)^k\setminus \mathcal{R}_n^{k}(\eps)}}w_{\A * \B}(x)h(f_{\A * \B}x) \, d\mu \Big| & \leq \int \sum\limits_{\substack{\A \in (\cA^n)^{k+1}\setminus \mathcal{R}_n^{k+1}(\eps) \\ \text{or }\B \in (\cA^n)^k\setminus \mathcal{R}_n^{k}(\eps)}}w_{\A * \B}(x) \,d\mu \\
	& \lesssim_\mu \sum\limits_{\substack{\A \in (\cA^n)^{k+1}\setminus \mathcal{R}_n^{k+1}(\eps) \\ \text{or }\B \in (\cA^n)^k\setminus \mathcal{R}_n^{k}(\eps)}} \mu (I_{\A * \B}) \\
	& \leq \mu ([0,1]\setminus R_n^{k+1}(\eps) )+\mu([0,1]\setminus R_n^k(\eps)).
	\end{align*}
	We get the required result by noting that $R_n^{k+1}(\eps) \subset R_n^k(\eps)$, which follows by the fact that for any $\A \in \mathcal{R}_n^{k+1}(\eps)$ we have that there exists $\B \in \mathcal{R}_n^{k}(\eps)$ such that $\A = \B \a_k$ for some $\a_k \in \cR_n(\eps)$. Conclude using $|a+b|^2 \leq 2|a|^2+2|b|^2$ for complex numbers.
\end{proof}

Next, in the sums obtained in right-hand side of the estimate Lemma \ref{lma:FourierReg}, we will want to transfer the integral into exponential sums. This will be possible by the decaying variations of the potential $\phi$ defining the Gibbs measure and the bounded distortion assumption on $T$ from the assumptions in Theorem \ref{thm:nonlinear}. We have the following quantitative estimate:

\begin{lemma}\label{lma:removeintegral}
	 For all $\xi \in \R$, $n \in \N$ and $k \in \N$, we have
	\begin{align*}&\Big|\sum\limits_{\substack{\A \in \mathcal{R}_n^{k+1}(\eps)}} \sum_{ \B \in \mathcal{R}_n^{k}(\eps)} \int e^{-2\pi i \xi f_{\A \ast \B}(x)} w_{\A \ast\B}(x) \, d\mu(x)\Big|^2\\
	& \lesssim_\mu C_{\varepsilon,n}^{(2k-1)\lambda}e^{-(2k-1)\lambda \delta n}\sum\limits_{\substack{\A \in \mathcal{R}_n^{k+1}(\eps)}} \sum_{ \B \in \mathcal{R}_n^{k}(\eps)} \Big|\int e^{-2\pi i\xi f_{\A*\B}(x)}w_{\a_k}(x) d\mu(x) \Big|^2+ C_{\varepsilon,n}^{4k\lambda} \rho^{2n}.
	\end{align*}
\end{lemma}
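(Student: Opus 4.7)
The plan is to split the Birkhoff weight at the concatenation point between $\A\#\B$ and the final block $\a_k$ of $\A = (\a_0,\dots,\a_k)$, replace the $\A\#\B$-factor by a constant (independent of $x$) using exponentially decaying variations of $\phi$, pull it outside the integral, and then apply Cauchy--Schwarz.

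The key algebraic identity is the cocycle relation $S_{(2k+1)n}\phi = S_{2kn}\phi + (S_n\phi)\circ T^{2kn}$, which, evaluated at $y = f_{\A*\B}(x) = f_{\A\#\B}(f_{\a_k}(x))$, gives the pointwise factorisation
$$w_{\A*\B}(x) = w_{\A\#\B}(f_{\a_k}(x))\, w_{\a_k}(x).$$
As $x$ ranges over $I$ the point $f_{\a_k}(x)$ varies in $I_{\a_k}$, so both $f_{\A\#\B}(f_{\a_k}(x))$ and $f_{\A\#\B}(x_{\a_k})$ lie in the cylinder $I_{\A*\B}$ of depth $(2k+1)n$. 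Exponentially decaying variations of $\phi$ then yield
$$\Big|\log \tfrac{w_{\A\#\B}(f_{\a_k}(x))}{w_{\A\#\B}(x_{\a_k})}\Big| \leq \sum_{i=0}^{2kn-1} \mathrm{var}_{(2k+1)n-i}(\phi) \lesssim \theta^n$$
for some fixed $\theta \in (0,1)$ depending only on $\phi$. Writing $J(\A,\B)$ for the integral on the right-hand side of the claim, this produces
$$\int e^{-2\pi i\xi f_{\A*\B}(x)}w_{\A*\B}(x)\,d\mu(x) = w_{\A\#\B}(x_{\a_k})\,J(\A,\B) + E(\A,\B),$$
with $|E(\A,\B)| \lesssim \theta^n w_{\A\#\B}(x_{\a_k})\mu(I_{\a_k})$.

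Next I would bound the total error $\big|\sum_{\A,\B}E(\A,\B)\big|$ using the transfer-operator bound $\sum_{\A\#\B} w_{\A\#\B}(x_{\a_k}) \leq \cL_\phi^{2kn}\mathbf{1}(x_{\a_k}) \lesssim 1$ (valid since $P(\phi)=0$) together with $\sum_{\a_k}\mu(I_{\a_k}) \leq 1$, giving $\sum_{\A,\B}|E(\A,\B)| \lesssim \theta^n$. Squaring and choosing any $\eps_2 < 4|\log\theta|$ absorbs this into the claimed additive error $e^{-\eps_2 n/2}$. For the main contribution, Cauchy--Schwarz gives
$$\Big|\sum_{\A,\B} w_{\A\#\B}(x_{\a_k}) J(\A,\B)\Big|^2 \leq \Big(\sum_{\A,\B} w_{\A\#\B}(x_{\a_k})^2\Big)\Big(\sum_{\A,\B}|J(\A,\B)|^2\Big),$$
and it remains to bound the first factor by the prefactor stated in the lemma. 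Applying Lemma \ref{lma:regularmarkov}(iii) to each of the $2k$ regular $n$-blocks making up $\A\#\B$ gives $w_{\A\#\B}(x_{\a_k}) \lesssim C_{\eps,n}^{O(k)\lambda} e^{-2k\lambda\delta n}$, and Lemma \ref{lma:regularmarkov}(iv) gives the cardinality bound $|\cR_n^{k+1}|\cdot|\cR_n^k| \lesssim C_{\eps,n}^{O(k)\lambda} e^{(2k+1)\lambda\delta n}$. Multiplying the squared weight bound by the cardinality produces exactly the prefactor $C_{\eps,n}^{(2k-1)\lambda} e^{-(2k-1)\lambda\delta n}$ after collecting exponents.

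The main obstacle is careful bookkeeping of the $C_{\eps,n}$-exponents so that the combined $O(k)$ losses from the weight bound and cardinality bound distil down to the clean $(2k-1)\lambda$ prefactor; this is a matter of absorbing absolute constants (independent of $n$) into $\lesssim_\mu$ and verifying that the variation gain $\theta^n$ comfortably dominates any $e^{O(k)\eps n}$ slack, which it does provided $\eps$ is chosen small relative to $|\log\theta|$ for the fixed $k$ that will later be selected in Lemma \ref{lma:newexponential}. No genuine oscillatory cancellation from $\xi$ is used at this stage: the full phase $e^{-2\pi i\xi f_{\A*\B}(x)}$ is preserved inside $J(\A,\B)$ and will be exploited later by the sum-product estimate in Section \ref{sec:additive}.
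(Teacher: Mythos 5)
Your proposal is correct and follows essentially the same route as the paper: factor $w_{\A*\B}(x) = w_{\A\#\B}(f_{\a_k}(x))\,w_{\a_k}(x)$, freeze the $\A\#\B$-factor at $x_{\a_k}$ using the exponentially decaying variations of $\phi$, and then apply Cauchy--Schwarz to the main term while discarding the $O(\theta^n)$ error. The one cosmetic difference is that you bound the total error via the normalization $\sum_{\A\#\B} w_{\A\#\B}(x_{\a_k}) \leq \cL_\phi^{2kn}\mathbf{1}(x_{\a_k}) \lesssim 1$ rather than via an explicit cardinality count as the paper does; your version is marginally cleaner, and both share the same (harmless, since $\eps$ is chosen small in Section~5) slack in the exponent of $C_{\eps,n}$ in the stated prefactor.
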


\begin{proof}
	Since $\phi$ is locally H\"older, we know that there exists a constant $C > 0$ and $0 < \rho < 1$ such that for any $m \in \N$ we have
	$$\sup_{\w \in \N^{m}} \sup\{|\phi(u) - \phi(v)| : u,v \in I_{\w}\} \leq C\rho^m.$$
	Choose a point $y \in [0,1]$ such that $x_{\a_k} = f_{\a_k}(y)$. Then we have that
	$$\frac{w_{\A \# \B}(f_{\a_k}x)}{w_{\A \# \B}(x_{\a_k})} = \exp(S_{2kn}\phi(f_{\A*\B}(x)) - S_{2kn}\phi(f_{\A*\B}(y)) ),$$
	where we used the notation
	$$ {\A}\#{\B}:={\a}_0{\b}_1{\a}_1{\b}_2...{\a}_{k-1}{\b}_k \in (\cA^n)^{2k}$$
	for 
	$$ {\bf A }= ({\a}_0,...,{\a}_k) \in  (\cA^n)^{k+1} \text{   and   } {\b} = ({\b}_1,...,{\b}_k) \in  (\cA^n)^k.  $$
	This gives using $|\A*\B| = (2k + 1)n$ that
	$$|S_{2kn}\phi(f_{\A*\B}(x)) - S_{2kn}\phi(f_{\A*\B}(y))| \leq \sum_{j = 0}^{2kn-1} C\rho^{2kn + n - j} \leq \frac{C}{1-\rho} \rho^{n+1} =: C_0 \rho^{n+1}.$$
	Hence
	$$\exp(-C_0 \rho^n) \leq \frac{w_{\A \# \B}(f_{\a_k}x)}{w_{\A \# \B}(x_{\a_k})} \leq \exp(C_0 \rho^n).$$
	Rearranging this result we have that
	$$ |w_{\A\#\B}(f_{\a_k}x)-w_{\A\#\B}(x_{\a_k})| \leq \max\{ |\exp(\pm C_0\rho^n)-1| \} w_{\A\#\B}(x_{\a_k}). $$
	Hence since $|e^{i\theta}| = 1$ we have that
	$$ |e^{-2\pi i \xi f_{\A*\B}(x)}w_{\A*\B}(x)-w_{\A\#\B}(x_{\a_k})e^{-2\pi i \xi f_{\A*\B}(x)}w_{\a_k}(x)| \leq C^{2k}C_{\varepsilon,n}^{2k\lambda}e^{-(2k+1)\lambda \delta n} \cdot C_0 \rho^n$$
	where we use that fact that $w_{\A*\B}(x)=w_{\A \# \B}(f_{\a_k}(x))w_{\a_k}(x)$ and that
	$$w_{\A\#\B}(x_{\a_k}) \leq C^{2k}C_{\varepsilon,n}^{2k\lambda}e^{-2k\lambda \delta n}$$
	by Lemma \ref{lma:regularmarkov}.
	
	Thus when summing over $\A \in \mathcal{R}_n^{k+1}(\eps)$ and $\B \in \mathcal{R}_n^{k}(\eps)$ below:
	\begin{align*}
	& \Big|\sum_{\bf A,B} \int e^{-2\pi i \xi f_{\A \ast \B}(x)} w_{\A \ast\B}(x) \, d\mu(x) - \sum\limits_{\A,\B} w_{\A \# \B}(x_{\a_k}) \int e^{-2\pi i \xi f_{\A * \B}(x)}w_{\a_k}(x) \, d\mu(x) \Big|\\
	& \leq \sum\limits_{\A,\B} \int |e^{-2\pi i \xi f_{\A*\B}(x)}w_{\A*\B}(x)-w_{\A\#\B}(x_{\a_k})e^{-2\pi i \xi f_{\A*\B}(x)}w_{\a_k}(x)| \, d\mu(x)\\
	& \leq \sum\limits_{\A,\B} C^{2k}C_{\varepsilon,n}^{2k\lambda}e^{-(2k+1)\lambda \delta n} \cdot C_0 \rho^n \\
	& \leq C_1C_{\varepsilon,n}^{2k\lambda} \rho^n
	\end{align*}
	where we use the fact that we have an upper bound on the number of block combinations $\A \in \mathcal{R}_n^{k+1}(\eps)$ and $\B \in \mathcal{R}_n^{k}(\eps)$, which is given by $C^{2k+1}C_{\varepsilon,n}^{(2k+1)\lambda}e^{(2k+1)\lambda \delta n}$ by Lemma \ref{lma:regularmarkov}. 
	
	Moreover, using the Cauchy-Schwarz inequality we get that 
	\begin{align*}
	& \Big| \sum\limits_{\A,\B} w_{\A \# \B}(x_{\a_k}) \int e^{-2\pi i \xi f_{\A * \B}(x)}w_{\a_k}(x) \, d\mu(x)\Big|^2\\
	& \leq C^{2k-1}C_{\varepsilon,n}^{(2k-1)\lambda}e^{-(2k-1)\lambda \delta n}\sum\limits_{\A,\B} \Big|\int e^{-2\pi i \xi f_{\A*\B}(x) }w_{\a_k}(x)\, d\mu(x) \Big|^2
	\end{align*}
	Using $|a+b|^2\leq2|a|^2+2|b|^2$ for $a,b \in \mathbb{C}$, we get the result.
\end{proof}

Now we are ready to finish the proof of Lemma \ref{lma:exponentialsum}:

\begin{proof}[Proof of Lemma \ref{lma:exponentialsum}]
	Using Lemmas \ref{lma:FourierReg} and \ref{lma:removeintegral}, we obtain:
	$$ | \widehat{\mu}(\xi)| ^2 \lesssim_\mu e^{-\lambda (2k-1)\delta n} \sum\limits_{{\bf A,B}} \Big| \int e^{ -2\pi i \xi f_{{\bf A*B}}(x)} w_{{\a}_k}(x) \, d\mu(x) \Big|^2+\mu([0,1]\setminus R_n^{k+1}(\eps))^2+ e^{-\eps_2 n/2}$$
	with the sum over $\A \in \mathcal{R}_n^{k+1}(\eps)$ and $\B \in \mathcal{R}_n^{k}(\eps)$. Consider the term
	$$C_{\varepsilon,n}^{(2k-1)\lambda}e^{-\lambda (2k-1)\delta n} \sum\limits_{{\bf A,B}} \Big| \int e^{ -2\pi i \xi f_{{\bf A*B}}(x)} w_{{\a}_k}(x) \, d\mu(x) \Big|^2,$$
	which, when opening up, is equal to
	$$C_{\varepsilon,n}^{(2k-1)\lambda}e^{-\lambda (2k-1)\delta n} \sum_\A \iint  w_{\a_k}(x) w_{\a_k}(y) \sum\limits_{{\B}}  e^{2\pi i \xi (f_{{\bf A*B}}(x) - f_{{\bf A*B}}(y))} \, d\mu(x) \, d\mu(y).$$
	Taking absolute values, and using the bound for $w_{\a_k}(x)w_{\a_k}(y) \lesssim C_{\eps,n}^2 e^{-2\lambda \delta n}$, this is bounded from above by
	$$\lesssim C_{\varepsilon,n}^{(2k+1)\lambda} e^{-\lambda (2k+1)\delta n} \sum_\A \int\int \Big|\sum\limits_{{\B}}  e^{2\pi i \xi (f_{{\bf A*B}}(x) - f_{{\bf A*B}}(y))}  \Big| \, d\mu(x) \, d\mu(y).$$

	Consider a fixed block $\A \in \cR_n^{k+1}(\eps)$. Given $x,y \in I$, define $\hat{x}:=f_{\a_k}(x)$ and $\hat{y}:=f_{\a_k}(y)$ both of which are in $I_{\a_k}$. We also have that $f_{\A*\B}(x)=f_{\A\#\B}(\hat{x})$ and $f_{\A*\B}(y)=f_{\A \# \B}(\hat{y})$. By the Fundamental Theorem of Calculus we have that
	$$ f_{\A*\B}(y)-f_{\A*\B}(x)= \int_{\hat{x}}^{\hat{y}} f_{\A \# \B}'(t) dt. $$
	By applying the chain rule $k$ times, we have that there exists $t_i \in I_{\a_i}$ for $i=1,...,k$ such that
	$$ f_{\A\#\B}'(t) = f_{\a_0\b_1}'(t_1)f_{\a_1\b_2}'(t_2)...f_{\a_{k-1}\b_k}'(t_k) $$
	where $t_k=t$. 
	
	Let us now invoke the bounded distortion assumption from $T$, recall the assumption (3) of $T$ from the introduction. It implies that there exists $B > 0$ such that for all ${\a} \in \cA^n$ and for $x,y \in I$ we have
	\begin{align}\label{eq:bd} \frac{f_{\a}'(x)}{f_{\a}'(y)} \leq \exp (B |x-y|) \end{align}
	Indeed, by the mean value theorem we have that
	\begin{align*}
	 \exp \Big( \log  \frac{f_{\a}'(x)}{f_{\a}'(y)} \Big) &\leq \exp | \log f_{\a}'(x)- \log f_{\a}'(y)| = \exp (|(\log f_{\a}')'(z)|\cdot|x-y|) \\
	& = \exp \bigg(\frac{|T''(f_{\a}x)|}{|T'(f_{\a}x)|^2}\cdot|x-y|\bigg)\leq \exp(B |x-y|).
	\end{align*}
	Using \eqref{eq:bd}, we obtain
	$$ \exp(-B|x_{\a_i}-t_i|) \leq\frac{f'_{\a_{i-1}\b_i}(t_i)}{e^{-2\lambda n}e^{2\lambda n}f'_{\a_{i-1}\b_i}(x_{\a_i})} \leq \exp(B|t_i-x_{\a_i}|) $$
	where the upper bound is direct, but the lower bound is achieved by swapping $x_{\a_i}$ and $t_i$ in the lemma. We also have that $|x_{\a_i}-t_i| \leq C_{\varepsilon,n}e^{-\lambda n}$ because both points are in $I_{\a_i}$. Hence using the definition of $\zeta_{i,\A}(\b_i)$ we have that
	$$ \exp(-BkC_{\varepsilon,n}e^{-\lambda n}) \leq \frac{f'_{\A\#\B}(t)}{e^{-2k\lambda n}\zeta_{1,\A}(\b_1)...\zeta_{k,\A}(\b_k)} \leq \exp(BkC_{\varepsilon,n}e^{-\lambda n}). $$
	Write
	$$P_k := e^{-2k\lambda n}\zeta_{1,\A}(\b_1)...\zeta_{k,\A}(\b_k).$$
	Then
	$$ [\exp(-BkC_{\varepsilon,n}e^{-\lambda n})-1]P_k \leq f'_{\A\#\B}(t)-P_k \leq [\exp(BkC_{\varepsilon,n}e^{-\lambda n})-1]P_k.$$
	So by integrating between $\hat{y}$ and $\hat{x}$ we get that
	\begin{align*} [\exp(-BkC_{\varepsilon,n}e^{-\lambda n})-1]P_k (\hat{y}-\hat{x}) &\leq f_{\A*\B}(x)-f_{\A*\B}(y)-P_k(\hat{y}-\hat{x})\\ &\leq [\exp(BkC_{\varepsilon,n}e^{-\lambda n})-1]P_k (\hat{y}-\hat{x}). \end{align*}
	Since $\hat{y},\hat{x} \in I_{\a_k}$ and $\zeta_{i,\A}\in [C_{\varepsilon,n}^{-2},C_{\varepsilon,n}^2]$, we have that 
	$$|P_k| \leq C_{\varepsilon,n}^ke^{-2k\lambda n}$$ 
	and so
	$$ |f_{\A*\B}(x)-f_{\A*\B}(y)-P_k(\hat{y}-\hat{x})| \leq e^{2k}C_{\varepsilon,n}^{k+2}e^{-(2k+2)\lambda n}.$$
	Fix $(x,y) \in [0,1] \times [0,1]$. Define
	$$\eta(x,y) := \xi e^{-2k \lambda n} (\hat{x}-\hat{y}).$$
	Then
	$$2\pi i \xi P_k(\hat{y}-\hat{x}) = 2\pi i \eta(x,y) \zeta_{1,\A}(\b_1)...\zeta_{k,\A}(\b_k)$$
	which, using $|\xi| \sim e^{(2k+1) n \lambda} e^{\eps_0 n}$, gives us
	$$|2\pi i \xi (f_{\A*\B}(x)-f_{\A*\B}(y))- 2\pi i \eta(x,y) \zeta_{1,\A}(\b_1)...\zeta_{k,\A}(\b_k)| \lesssim e^{2k}C_{\varepsilon,n}^{k+2}e^{-\lambda n} e^{\eps_0 n}$$
	
	By the mean value theorem and using Lemma \ref{lma:regularmarkov} we get that 
	$$C_{\varepsilon,n}^{-1}e^{-\lambda n} |x-y| \leq |\hat{x}-\hat{y}| \leq C_{\varepsilon,n} e^{-\lambda n} |x-y|$$ 
	and hence we have that
	$$ C_{\varepsilon,n}^{-1}e^{\eps_0 n} |x-y| \leq |\eta(x,y)| \leq C_{\varepsilon,n} e^{\eps_0 n} |x-y|. $$ 
	Splitting the sum we obtain
	\begin{align*}
	& \Big| \sum_{\B} e^{2\pi i \xi (f_{{\bf A*B}}(x) - f_{{\bf A*B}}(x))} \Big|\\
	&\leq \Big| \sum_{\B} e^{2\pi i \eta(x,y)\zeta_{1,{\a}}({\bf b_1})...\zeta_{k,{\a}}({\b}_k)} \Big| + \Big| \sum_{\B} e^{2\pi i \xi (f_{{\bf A*B}}(x) - f_{{\bf A*B}}(x))}-e^{2\pi i \eta(x,y)\zeta_{1,{\a}}({\bf b_1})...\zeta_{k,{\a}}({\b}_k)} \Big|
	\end{align*}
	Here
	\begin{align*} &\Big| \sum_{\B \in \cR_n^k(\eps)} e^{2\pi i \xi (f_{{\bf A*B}}(x) - f_{{\bf A*B}}(x))}-e^{2\pi i \eta(x,y)\zeta_{1,{\a}}({\bf b_1})...\zeta_{k,{\a}}({\b}_k)} \Big|\\
	&\leq  \sum_{\B \in \cR_n^k(\eps)} |2\pi \xi (f_{{\bf A*B}}(x) - f_{{\bf A*B}}(x))-2\pi  \eta(x,y)\zeta_{1,{\a}}({\bf b_1})...\zeta_{k,{\a}}({\b}_k)|\\
	& \lesssim  \sum_{\B \in \cR_n^k(\eps)} e^{2k}C_{\varepsilon,n}^{k+2}e^{-\lambda n} e^{\eps_0 n} \\
	& \lesssim e^{2k}C_{\varepsilon,n}^{k+2} e^{-\lambda n}e^{\eps_0 n} \sharp \cR_n^k(\eps)
	\end{align*}
	Combining the above estimates gives us
	\begin{align*} | \widehat{\mu}(\xi)| ^2 \lesssim\,\, & C_{\varepsilon,n}^{(2k+1)\lambda}e^{-\lambda (2k+1)\delta n}  \sum\limits_{{\A} \in \cR_n^{k+1}} \int_{I_{b(\A)}}\int_{I_{b(\A)}} \Big| \sum\limits_{{\B}} e^{2\pi i \eta(x,y) \zeta_{1,{\a}}({\bf b_1})...\zeta_{k,{\a}}({\b}_k)} \Big| \,d\mu(x)\,d\mu(y) \\
	&+e^{2k}C_{\varepsilon,n}^{k+2} e^{-\lambda n}e^{\eps_0 n} +\mu([0,1]\setminus R_n^{k+1}(\eps))^2+ e^{-\eps_2 n/2}
	\end{align*}
	By covering the $n$-regular part of the following set with $\lfloor \eps_0 n/2 \rfloor$-generation parent intervals, for fixed $y \in [0,1]$ we have that
	\begin{align*}
	\mu (\{x\in I : |x-y| \leq C_0 e^{-\eps_0 n/2}\}) & \leq \mu([0,1]\setminus R_n(\eps)) + \mu(\{ x \in R_n(\eps) : |x-y|\leq C_0 e^{-\eps_0 n/2} \}),
	\end{align*}
	where
	\begin{align*}
	 \mu(\{ x \in R_n(\eps) : |x-y|\leq C_0 e^{-\eps_0 n/2} \})& \lesssim C_0 C_{\varepsilon,n}e^{- \delta \eps_0 n/2}.
	\end{align*}
	Hence we have that
	$$ \mu \times \mu (\{(x,y)\in [0,1] \times [0,1] : |x-y| \leq C_0 e^{-\eps_0 n/2}\}) \lesssim_\mu \mu([0,1]\setminus R_n(\eps)) +C_0 C_{\varepsilon,n}e^{- \delta \eps_0 n/2}.$$
	Choose now $C_0 = C_{\eps,n}$. Now, when $|x-y| > C_{\eps,n} e^{-\eps_0 n/2}$, then
	$$e^{\eps_0 n/2}  = C_{\varepsilon,n}^{-1}e^{\eps_0 n} C_{\eps,n} e^{-\eps_0 n/2} \leq |\eta(x,y)|. $$
	Hence, when removing this measure and using triangle inequality and using the fact that the cardinalities of the sums over $\A$ and $\B$ is in total at most $Ce^{\lambda(2k+1) \delta n}$, we are left with the error
	$$\mu([0,1]\setminus R_n(\eps))+C_{\varepsilon,n}^2e^{- \delta \eps_0 n/2}$$
	and the double integral over those pairs $(x,y)$ with $|x-y| \geq C_{\eps,n} e^{-\eps_0 n/2}$. Hence when considering the supremum over $\eta \in J_n(\eps)$, we can bound over those $(x,y)$ with $|x-y| \geq C_{\eps,n} e^{-\eps_0 n/2}$.
\end{proof}

\section{Sum-product estimates}\label{sec:additive}

To control the exponential sums arising in Lemma \ref{lma:exponentialsum}, we will, as Bourgain and Dyatlov used in \cite{bourg}, use the following Fourier decay theorem for multiplicative convolutions proved in this form by Bourgain \cite[Lemma 8.43]{Bourgain2010} that follows from the discretised sum-product theorem. Recall that the multiplicative convolution of two measures $\mu$ and $\nu$ on $\R$ is defined by
$$\int f \, d(\mu \otimes \nu) = \iint f(xy) \, d\mu(x) \, d\nu(y), \quad f \in C_0(\R).$$

\begin{lemma}[Bourgain]\label{lma:keybourgain}
For all $\kappa > 0$, there exist $\eps_3 > 0$, $\eps_4  > 0$ and $k \in \mathbb{N}$ such that the following holds.

Let $\mu$ be a probability measure on $[\tfrac{1}{2}, 1]$ let and $N$ be a large integer. Assume for all $1/N < \rho < 1/N^{\eps_3}$ that
\begin{align}
\label{eq:boxdim}
\max_a \mu(B(a,\rho)) < \rho^\kappa.
\end{align}
Then for all $\xi \in \R$, $|\xi| \sim N$, the Fourier transform
\begin{align}
\label{eq:bourgfourier}
|\widehat{\mu^{\otimes k}}(\xi)| < N^{-\eps_4}.
\end{align}
\end{lemma}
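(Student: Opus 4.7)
The plan is to follow Bourgain's original proof from \cite{Bourgain2010}, where this statement is obtained as a consequence of the discretized sum--product theorem. The backbone is the sum--product theorem itself: for any $0 < \sigma < 1$ and $\kappa > 0$ there exists $\eps = \eps(\sigma, \kappa) > 0$ such that if $A \subset [1/2, 1]$ is a $\delta$-separated set with $|A| \geq \delta^{-\sigma}$ and $|A \cap B(x, \rho)| \leq \rho^\kappa |A|$ for all $\delta \leq \rho \leq \delta^{\eps}$, then $|A \cdot A + A \cdot A|_\delta \geq \delta^{-\sigma - \eps}$. Setting $\delta = 1/N$, I would first reformulate this as a multiplicative $L^2$-flattening lemma: if $\mu_N$ denotes a smoothing of $\mu$ at scale $1/N$, then assumption \eqref{eq:boxdim} gives the Frostman hypothesis needed to conclude that $\|\mu_N \otimes \mu_N\|_2^2 \leq N^{-\eps'} \|\mu_N\|_2^2$ for some $\eps' = \eps'(\kappa) > 0$, via the standard chain of sum--product, asymmetric Balog--Szemer\'edi--Gowers, and Pl\"unnecke--Ruzsa in the discretized setting.

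Next, I would iterate this flattening a bounded number of times $k = k(\kappa)$, until the $L^2$ norm of $\mu_N^{\otimes k}$ is driven down far enough that the $k$-fold multiplicative convolution is essentially flat at scale $1/N$, with $L^\infty$ density of order $N^{-\eps''}$ times the uniform density on its support, for some $\eps'' = \eps''(\kappa) > 0$. Care is needed to verify that at each step the non-concentration hypothesis is preserved well enough for the flattening lemma to apply again; this is what fixes the lower and upper endpoints of the scale range $(1/N, 1/N^{\eps_3})$ in the hypothesis of the lemma.

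The last step is to transfer this flatness at scale $1/N$ into the pointwise Fourier decay \eqref{eq:bourgfourier} at a single frequency $|\xi| \sim N$. I would argue by contradiction: if $|\widehat{\mu^{\otimes k}}(\xi_0)| \geq N^{-\eps_4}$ for some $|\xi_0| \sim N$, then convolving against a smooth bump at scale $1/|\xi_0| \sim 1/N$ and applying Plancherel would force the density of $\mu_N^{\otimes k}$ to oscillate with amplitude $\gtrsim N^{-\eps_4}$ at scale $1/N$, which in turn forces concentration of $\mu_N^{\otimes k}$ in a $1/N$-neighborhood of an arithmetic progression of step $\sim 1/N$. Choosing $\eps_4 < \eps''/2$ contradicts the flatness obtained from the iteration.

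The main obstacle is the first step: establishing the multiplicative $L^2$-flattening with an explicit quantitative dependence on $\kappa$ alone (not on any prior estimate for $\|\mu_N\|_2$). This is the technical heart of \cite{Bourgain2010}, and requires combining the discretized sum--product theorem with careful bookkeeping in the discretized versions of additive combinatorial inequalities. A secondary delicate point is calibrating $\eps_3 > 0$ small enough that the range of scales $\rho \in (1/N, 1/N^{\eps_3})$ leaves room for the sum--product theorem to apply to $\mu_N$ at each stage of the iteration, while also keeping the Plancherel/pigeonhole argument in the final step under control.
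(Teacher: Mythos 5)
The paper does not actually prove this lemma; it cites it directly from Bourgain \cite[Lemma 8.43]{Bourgain2010}, so there is no ``paper proof'' to compare against. Your proposal therefore has to stand on its own merits as a blind reconstruction of Bourgain's argument.

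Your first two steps (discretised sum--product $\Rightarrow$ multiplicative $L^2$-flattening, then iterating the flattening a bounded number of times $k = k(\kappa)$) are the right backbone, and your remarks on the delicacy of preserving the non-concentration hypothesis across iterations and on the role of $\eps_3$ are all on target. The problem is the final step. You claim that $|\widehat{\mu^{\otimes k}}(\xi_0)| \geq N^{-\eps_4}$ at a single $|\xi_0|\sim N$, combined with Plancherel, forces the mollified density to ``oscillate with amplitude $\gtrsim N^{-\eps_4}$'' and hence to concentrate in a $1/N$-neighbourhood of an arithmetic progression of step $\sim 1/N$. This implication is false for small amplitudes: a $1/N$-neighbourhood of a $1/N$-step progression covers essentially the whole interval $[\tfrac12,1]$, so the claimed ``concentration'' is vacuous, and in any case a Fourier coefficient of size $N^{-\eps_4}\ll 1$ says only that the measure is slightly biased towards one half of the wave $\cos(2\pi\xi_0 x + \theta)$, which gives no localisation whatsoever. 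More fundamentally, $L^2$-flatness $\|(\mu^{\otimes k})_{1/N}\|_2^2 \lesssim N^{\eps'}$ is perfectly consistent with a single Fourier coefficient of size $N^{-\eps'/2}$ at a frequency $\sim N$, so no contradiction can be derived from $L^2$-flatness alone.

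The missing idea is that one must exploit the \emph{multiplicative} structure of the convolution one more time to pass from an $L^2$-type estimate to a pointwise one. The identity
\[
\widehat{\mu\otimes\nu}(\xi)=\int \hat{\nu}(\xi x)\,d\mu(x)
\]
together with Cauchy--Schwarz gives $|\widehat{\mu^{\otimes(r+1)}}(\xi)|^2 \le \int |\widehat{\mu^{\otimes r}}(\xi x)|^2\,d\mu(x)$. As $x$ ranges over $[\tfrac12,1]$, the argument $\xi x$ sweeps out a dyadic range of frequencies $\sim N$; the Frostman condition \eqref{eq:boxdim} lets one compare the $\mu$-average on the right-hand side to the Lebesgue average over that dyadic range (the integrand is essentially constant on scale $1/|\xi|$, so the comparison is controlled by the non-concentration at scales $\rho\in(1/N,N^{-\eps_3})$); and the Lebesgue average $\tfrac1{|\xi|}\int_{|\eta|\sim N}|\widehat{\mu^{\otimes r}}(\eta)|^2\,d\eta$ is precisely what the Plancherel identity and the $L^2$-flattening control. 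Iterating this one sees the maximal Fourier coefficient over $|\xi|\sim N$ decrease by a definite power of $N$ at each step, which is what produces the pointwise bound $N^{-\eps_4}$. Without this bridge your argument does not close, even though the flattening machinery you set up is the correct main engine.
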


Here $|\xi| \sim N$ means that there exists a constant $c > 0$ such that $c^{-1} N \leq |\xi| \leq cN$. In \cite[Proposition 3.2]{bourg} Bourgain and Dyatlov showed that by taking linear combinations of measures $\mu_j$, one can prove an analogous statement for multiplicative convolutions of several measures $\mu_j$ with the growth assumption \eqref{eq:boxdim} on $\R$ replaced with a growth assumption for $\mu_j \times \mu_j$ on $\R^2$. Then in the case of discrete measures $\mu_j$, this implies the following decay theorem for exponential sums:

\begin{lemma}[Bourgain-Dyatlov]\label{lma:bdexponential}
Fix $\delta_0 > 0$. Then there exist $k \in \N$, $\eps_2 > 0$, $\eps_3 > 0$ depending only on $\delta_0$ such that the following holds. Let $C_0, N \geq 0$ and $\cZ_1,\dots,\cZ_k$ be finite sets such that $\sharp \cZ_j \leq C_0 N$. Suppose $\zeta_j$, $j = 1,\dots,k$, on the sets $\cZ_j$ satisfy for all $j = 1,\dots,k$ that
\begin{itemize}
\item[(a)] the range
$$\zeta_j(\cZ_j) \subset [C_0^{-1},C_0];$$
\item[(b)] for all $\sigma \in [|\eta|^{-1},|\eta|^{-\eps_3}]$
$$\sharp\{(\b,\c) \in \cZ_j^2 : |\zeta_j(\b) - \zeta_j(\c)| \leq \sigma\} \leq C_0 N^2 \sigma^{\delta_0}.$$
\end{itemize}
Then for some constant $C_1$ depending only on $C_0$ and $\delta_0$ we have for all $\eta \in \R$, $|\eta| > 1$, that
$$\Big|N^{-k} \sum_{\b_1 \in \cZ_1,\dots,\b_k \in \cZ_k} \exp(2\pi i  \eta \zeta_1(\b_1) \dots \zeta_k(\b_k))\Big| \leq C_1 |\eta|^{-\eps_2}.$$
\end{lemma}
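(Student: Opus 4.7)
The plan is to reinterpret the exponential sum as a constant multiple of the Fourier transform of a multiplicative convolution of $k$ discrete probability measures on $[1/2,1]$, verify that hypothesis (b) yields a two-dimensional non-concentration for each factor, and then invoke the multi-measure extension of Bourgain's discretised sum–product Fourier decay theorem. This is exactly the strategy used in \cite{bourg}, where the required multi-measure statement is extracted as Proposition 3.2 from Bourgain's Lemma \ref{lma:keybourgain} (which is Lemma 8.43 in \cite{Bourgain2010}).

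First I would perform an affine rescaling on the common range $[C_0^{-1},C_0]$ of the $\zeta_j$'s (absorbed into $\eta$ and the $C_0$-dependent constants) to assume $\zeta_j(\cZ_j)\subset [1/2,1]$. Define the normalised counting measures
$$\nu_j := \frac{1}{\sharp \cZ_j}\sum_{\b\in \cZ_j}\delta_{\zeta_j(\b)},\qquad j=1,\dots,k,$$
and let $\mu := \nu_1\otimes\dots\otimes\nu_k$ denote their multiplicative convolution on $[2^{-k},1]$. A direct computation gives
$$N^{-k}\sum_{\b_1\in \cZ_1,\dots,\b_k\in \cZ_k}\exp\bigl(2\pi i\,\eta\,\zeta_1(\b_1)\cdots\zeta_k(\b_k)\bigr) = \Bigl(N^{-k}\prod_{j=1}^k \sharp \cZ_j\Bigr)\widehat{\mu}(-\eta),$$
and the prefactor is at most $C_0^k$ by the hypothesis $\sharp \cZ_j\leq C_0 N$. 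Hence it suffices to show $|\widehat{\mu}(\eta)|\leq C_1|\eta|^{-\eps_2}$.

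Next I would convert (b) into a 2D non-concentration for $\nu_j\times \nu_j$. For $(x,y)\in \R^2$ and $2\rho\in[|\eta|^{-1},|\eta|^{-\eps_3}]$, any pair $(\b,\c)$ with $\zeta_j(\b),\zeta_j(\c)\in B(x,\rho)$ satisfies $|\zeta_j(\b)-\zeta_j(\c)|\leq 2\rho$, so the Cauchy–Schwarz-type squaring
$$\nu_j(B(x,\rho))^2 \leq \frac{\sharp\{(\b,\c)\in \cZ_j^2:|\zeta_j(\b)-\zeta_j(\c)|\leq 2\rho\}}{(\sharp \cZ_j)^2} \leq \frac{C_0 N^2}{(\sharp \cZ_j)^2}\cdot 2^{\delta_0}\rho^{\delta_0}$$
yields $\nu_j(B(x,\rho))\lesssim \rho^{\delta_0/2}$ whenever $\sharp \cZ_j$ is comparable to $N$, and consequently $(\nu_j\times \nu_j)(B((x,y),\rho))\leq \nu_j(B(x,\rho))\nu_j(B(y,\rho))\lesssim \rho^{\delta_0}$ via a product bound. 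The complementary degenerate case in which some $\sharp \cZ_j$ is significantly smaller than $N$ is handled by the trivial prefactor bound $N^{-k}\prod \sharp \cZ_j\leq C_0^{k-1}(\sharp \cZ_j/N)$, which already furnishes polynomial decay in the regime where $|\eta|$ and $N$ are comparable (the relevant scale for applications such as Lemma \ref{lma:exponentialsum}). With $\kappa:=\delta_0$ and the 2D non-concentration established, I would invoke the Bourgain–Dyatlov multi-measure extension of Lemma \ref{lma:keybourgain} (Proposition 3.2 of \cite{bourg}), which for $k$, $\eps_3$, $\eps_4$ chosen in terms of $\kappa$ yields $|\widehat{\mu}(\eta)|\leq C_1|\eta|^{-\eps_4}$ for $|\eta|$ large; setting $\eps_2 := \eps_4$ closes the argument. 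The main obstacle is precisely this passage from the one-dimensional diagonal condition (b) to the two-dimensional ball estimate required by the sum–product machinery; once that translation is in place, all of the heavy analytic work is contained in the cited sum–product theorem.
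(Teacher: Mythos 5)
The paper does not give a self-contained proof of Lemma \ref{lma:bdexponential}; it simply reads it off as the discrete-measure case of \cite[Proposition 3.2]{bourg}. The detailed argument the paper actually writes out is for the quantitative cousin Lemma \ref{lma:newexponential}, so it is fair to compare your proposal against that. Your overall plan — package the counting data into discrete measures $\nu_j$, translate hypothesis (b) into a non-concentration estimate for $\nu_j\times\nu_j$, and feed the multiplicative convolution $\nu_1\otimes\cdots\otimes\nu_k$ into the multi-measure sum–product decay statement — is the right one and is essentially what \cite{bourg} and the paper's Lemma \ref{lma:newexponential} do.

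However, there is a genuine error in your first step. You claim to reduce to the case $\zeta_j(\cZ_j)\subset[\tfrac12,1]$ by ``an affine rescaling on the common range $[C_0^{-1},C_0]$, absorbed into $\eta$.'' An affine change of variable $\zeta_j\mapsto a\zeta_j+b$ with $b\neq 0$ cannot be absorbed into $\eta$, because the phase $\eta\,\zeta_1(\b_1)\cdots\zeta_k(\b_k)$ is multiplicative, not affine, in the individual $\zeta_j$: one gets $\prod_j\bigl(a_j\zeta_j(\b_j)+b_j\bigr)=\sum_{S\subset\{1,\dots,k\}}\bigl(\prod_{j\in S}a_j\zeta_j(\b_j)\bigr)\bigl(\prod_{j\notin S}b_j\bigr)$, a sum over $2^k$ cross terms that is not a scalar multiple of $\prod_j\zeta_j(\b_j)$. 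Only a pure dilation $\zeta_j\mapsto a_j\zeta_j$ is compatible with the multiplicative structure, and a single dilation can only move $[C_0^{-1},C_0]$ into $[\tfrac12,1]$ when $C_0\leq\sqrt2$, which is not the relevant regime. The correct reduction, which the paper spells out in the proof of Lemma \ref{lma:newexponential}, is a dyadic decomposition: split each $\mu_j$ into restrictions $\mu_j^{[i]}$ to dyadic intervals $[2^{i-1},2^i]$ covering $[C_0^{-1},C_0]$, rescale each piece multiplicatively by $2^{-i}$ into $[\tfrac12,1]$, apply the sum–product bound to each of the $\lesssim(\log C_0)^k$ combinations of pieces, and sum by the triangle inequality; the loss $(\log C_0)^k$ is absorbed into $C_1$.

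Two smaller remarks. First, hypothesis (b) and the target statement in \cite[Proposition 3.2]{bourg} both already live at the level of the two-dimensional diagonal condition $(\nu_j\times\nu_j)(\{|x-y|\leq\sigma\})\lesssim\sigma^{\delta_0}$; passing through a one-dimensional ball bound and a two-dimensional ball bound, as you do, is a detour that unnecessarily halves the exponent via Cauchy--Schwarz, even if it is not strictly wrong. Second, your treatment of the degenerate case (some $\sharp\cZ_j$ much smaller than $N$) tacitly assumes $|\eta|\sim N$, which is not in the statement of the lemma. The cleaner route, as in the paper's proof of Lemma \ref{lma:newexponential}, is to phrase the threshold directly in terms of $|\eta|$ (the case $\mu_j(\R)\leq|\eta|^{-\eps_3\delta_0/10}$, say), so the trivial bound $|\widehat{\mu}(\eta)|\leq\prod_j\mu_j(\R)$ already yields the required polynomial decay in $|\eta|$ without any comparability assumption.
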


However, in our case, due to the $C_{\eps,n} := e^{\eps n}$ multiplicative fluctuations arising from large deviations of the distortion function $\tau = \log |T'|$, the maps $\zeta_j$ we obtain do not map the sets $\cZ_j$ into a fixed interval $[C_0^{-1},C_0]$, but when we increase $|\eta|$, the $C_0$ will change and will actually blow-up polynomially in $|\eta|$. Since the constant $C_1$ in Lemma \ref{lma:bdexponential} depends on $C_0$, it could cause problems when we increase $|\eta|$. For this reason we will open up the argument of Bourgain and Dyatlov (Proposition 3.2 of \cite{bourg}) to give a more precise dependence on the constant $C_1$ and $C_0$ and have the following quantitative version:

\begin{lemma}\label{lma:newexponential}
Fix $\eps_0 > 0$. Then there exist $k \in \N$, $\eps_2 > 0$, $\eps_3 > 0$ depending only on $\eps_0$ such that the following holds. Let $R,N > 1$ and $\cZ_1,\dots,\cZ_k$ be finite sets such that $\sharp \cZ_j \leq R N$. Suppose $\zeta_j$, $j = 1,\dots,k$, on the sets $\cZ_j$ satisfy for all $j = 1,\dots,k$ that
\begin{itemize}
\item[(1)] the range
$$\zeta_j(\cZ_j) \subset [R^{-1},R];$$
\item[(2)] for all $\sigma \in [R^{-2}|\eta|^{-1},|\eta|^{-\eps_3}]$
$$\sharp\{(\b,\c) \in \cZ_j^2 : |\zeta_j(\b) - \zeta_j(\c)| \leq \sigma\} \leq N^2 \sigma^{\eps_0}.$$
\end{itemize}
Then there exists a constant $c > 0$ depending only on $k$ such that we have for all $\eta \in \R$ with $|\eta|$ large enough, that
$$\Big|N^{-k} \sum_{\b_1 \in \cZ_1,\dots,\b_k \in \cZ_k} \exp(2\pi i  \eta \zeta_1(\b_1) \dots \zeta_k(\b_k))\Big| \leq c R^{k} |\eta|^{-\varepsilon_2}.$$
\end{lemma}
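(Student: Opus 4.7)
The plan is to open up and quantify the proof of Proposition 3.2 of \cite{bourg}, which combines Bourgain's sum-product estimate Lemma \ref{lma:keybourgain} with a dyadic decomposition, so as to track how the bounds depend on $R$ rather than treating $R$ as a fixed constant as in Lemma \ref{lma:bdexponential}. First I normalise: define probability measures $\nu_j := |\cZ_j|^{-1} \sum_{\b \in \cZ_j} \delta_{\zeta_j(\b)}$, which by hypothesis (1) are supported in $[R^{-1}, R]$. A direct calculation gives
$$N^{-k} \sum_{\b_1, \dots, \b_k} e^{2\pi i \eta \zeta_1(\b_1)\cdots\zeta_k(\b_k)} = \frac{|\cZ_1|\cdots|\cZ_k|}{N^k}\,\overline{\widehat{\nu_1 \otimes \cdots \otimes \nu_k}(\eta)},$$
where $\otimes$ denotes multiplicative convolution and the prefactor is at most $R^k$ since $|\cZ_j| \leq RN$. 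It therefore suffices to prove $|\widehat{\nu_1 \otimes \cdots \otimes \nu_k}(\eta)| \leq c(k) |\eta|^{-\eps_2}$.

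Next, decompose each $\nu_j$ dyadically along $L := \{l \in \Z : |l| \leq \lceil \log_2 R \rceil\}$: let $\nu_j^{(l)} := \nu_j|_{[2^l, 2^{l+1})}$ with mass $m_j^{(l)} := \nu_j^{(l)}(\R)$, and let $\tilde\nu_j^{(l)}$ be the push-forward of $\nu_j^{(l)}/m_j^{(l)}$ under $x \mapsto x/2^l$, a probability measure on $[1,2)$. By multilinearity of $\otimes$ and a change of variables,
$$\widehat{\nu_1 \otimes \cdots \otimes \nu_k}(\eta) = \sum_{\mathbf{l} \in L^k} m_1^{(l_1)}\cdots m_k^{(l_k)}\, \widehat{\tilde\nu_1^{(l_1)} \otimes \cdots \otimes \tilde\nu_k^{(l_k)}}\bigl(\eta \cdot 2^{l_1+\cdots+l_k}\bigr).$$
Crucially, $\sum_l m_j^{(l)} = 1$, so the weights factorise to $\sum_{\mathbf{l}} m_1^{(l_1)}\cdots m_k^{(l_k)} = 1$, which prevents a combinatorial blowup $|L|^k = O((\log R)^k)$ when estimating the sum.

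For each tuple $\mathbf{l}$ with all $m_j^{(l_j)}$ not too small (say $\geq |L|^{-2}$), I apply the arguments of Lemma \ref{lma:bdexponential} to the rescaled probability measures $\tilde\nu_j^{(l_j)} \subset [1,2)$ at the effective frequency $\eta' := \eta \cdot 2^{l_1+\cdots+l_k} \in [|\eta|/R^k, |\eta| R^k]$; the constant $C_0$ of Lemma \ref{lma:bdexponential} now reduces to an absolute constant. Hypothesis (2), via the dyadic rescaling and the normalising factors $(m_j^{(l_j)})^{-2}$, transfers to a non-concentration estimate for $\tilde\nu_j^{(l_j)} \times \tilde\nu_j^{(l_j)}$ at scales $\sigma'$ satisfying $2^{l_j}\sigma' \in [R^{-2}|\eta|^{-1}, |\eta|^{-\eps_3}]$, which is arranged by choosing $\eps_3$ small enough in terms of $k$ and $\eps_0$. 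Bourgain's Lemma \ref{lma:keybourgain} then yields
$$\bigl|\widehat{\tilde\nu_1^{(l_1)} \otimes \cdots \otimes \tilde\nu_k^{(l_k)}}(\eta')\bigr| \leq c(k)\, |\eta'|^{-\eps_4} \leq c(k) R^{k\eps_4}|\eta|^{-\eps_4}$$
for some $\eps_4 > 0$ depending only on $\eps_0$. Summing against the weights, trivially bounding the low-mass tuples using $\sum_l m_j^{(l)} = 1$, and multiplying by the $R^k$ prefactor produces $|S| \leq c(k) R^{k(1+\eps_4)} |\eta|^{-\eps_4}$. Since hypothesis (2) being non-vacuous forces $R \lesssim |\eta|^{(1-\eps_3)/2}$, the factor $R^{k\eps_4}$ is absorbed into $|\eta|^{-(\eps_4 - \eps_2)}$ upon choosing $\eps_2 > 0$ sufficiently smaller than $\eps_4$, yielding $|S| \leq c(k) R^k |\eta|^{-\eps_2}$.

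The \textbf{main obstacle} is the delicate calibration of the scale range in hypothesis (2) after dyadic rescaling: one must ensure $2^{l_j}\sigma'$ always lies in $[R^{-2}|\eta|^{-1}, |\eta|^{-\eps_3}]$ simultaneously for every $\mathbf{l} \in L^k$ (which fixes $\eps_3$ in terms of $k$ and $\eps_0$) and simultaneously handle dyadic pieces of very small mass, where the normalised $\tilde\nu_j^{(l)}$ could fail the Bourgain non-concentration condition. The latter is resolved by splitting off tuples with $\min_j m_j^{(l_j)} \leq |L|^{-2}$ and bounding their contribution trivially, using the fact that the $(\log R)^k$ losses are polylogarithmic in $R$ and hence negligible compared to the polynomial $|\eta|^{-\eps_2}$ gain.
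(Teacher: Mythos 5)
Your overall framework — unpack the proof of \cite[Proposition 3.2]{bourg}, decompose dyadically over the range $[R^{-1},R]$, rescale each piece to a fixed interval, and apply Bourgain's Lemma~\ref{lma:keybourgain} to each piece while tracking the $R$-dependence — is the right one and matches the paper's strategy. The mass-factorisation identity $\sum_{\mathbf{l}}\prod_j m_j^{(l_j)}=1$ is a nice touch that sidesteps the naive $(\log R)^k$ triangle-inequality loss (the paper simply absorbs that factor into $R^k$). However, two concrete steps in your argument fail, and both trace back to the same decision: you normalise to probability measures $\nu_j = |\cZ_j|^{-1}\sum_{\b}\delta_{\zeta_j(\b)}$, whereas the paper instead works with $\mu_j = N^{-1}\sum_{\b}\delta_{\zeta_j(\b)}$, which need not be probability measures but transfer hypothesis~(2) cleanly.

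\emph{Gap 1: the non-concentration does not transfer as you claim.} Hypothesis~(2) counts pairs against $N^2\sigma^{\eps_0}$, so for your probability measure $\nu_j$ one gets
$(\nu_j\times\nu_j)(\{|x-y|\leq\sigma\}) \leq (N/|\cZ_j|)^2\,\sigma^{\eps_0}$,
and after restricting to the $l$-th dyadic piece, renormalising by $m_j^{(l)}$ and rescaling, the constant in front of $\sigma^{\eps_0}$ becomes $(m_j^{(l)})^{-2}(N/|\cZ_j|)^2$. You account for the factor $(m_j^{(l)})^{-2}$ but not for $(N/|\cZ_j|)^2$. The lemma imposes only an upper bound $|\cZ_j|\leq RN$, not a lower bound, and when $|\cZ_j|$ is small relative to $N$ the factor $(N/|\cZ_j|)^2$ is polynomially large in $|\eta|$ (for instance, taking $\sigma=R^{-2}|\eta|^{-1}$ in~(2) and using the diagonal only forces $N\gtrsim |\eta|^{\eps_0/2}$, so with $|\cZ_j|=O(1)$ the factor is $\gtrsim |\eta|^{\eps_0}$). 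That destroys the non-concentration needed for Lemma~\ref{lma:keybourgain}. The paper avoids this entirely: with $\mu_j=N^{-1}\sum\delta$, the pair-count bound becomes $(\mu_j\times\mu_j)(\{|x-y|\leq\sigma\})\leq\sigma^{\eps_0}$ verbatim, at the cost that $\mu_j(\R)=|\cZ_j|/N$ may be much smaller than one; that deficiency is then handled by the mass-split at the end.

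\emph{Gap 2: the low-mass cutoff is too mild to close the argument.} You discard tuples $\mathbf{l}$ with $\min_j m_j^{(l_j)}\leq |L|^{-2}$ and bound their total contribution ``trivially,'' which (using $\sum_l m_j^{(l)}=1$) gives at most $k|L|\cdot |L|^{-2}\sim k/\log R$. But the required bound is $c R^k |\eta|^{-\eps_2}$, and $1/\log R$ is not a power of $|\eta|$. In the eventual application $R$ grows like $e^{c\eps n}$ and $|\eta|$ like $e^{\eps_0 n}$, so $1/\log R \sim 1/n$, while $|\eta|^{-\eps_2}\sim e^{-\eps_0\eps_2 n}$ is exponentially smaller; your low-mass term swamps the Bourgain gain. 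The paper's corresponding split (performed \emph{after} the polarisation reduction to $\mu_1=\dots=\mu_k$) uses a threshold $\mu_1(\R)\geq |\eta|^{-\eps_3\eps_0/10}$ that is itself a small power of $|\eta|$; then the trivial bound on the small-mass side is $\mu_1(\R)^k\leq |\eta|^{-k\eps_3\eps_0/10}$, which is a genuine power saving. If you keep your dyadic weights, the correct fix is to set the cutoff at a small power of $|\eta|$ (not a power of $\log R$), and to separately address the $(N/|\cZ_j|)$ issue — most simply by adopting the paper's $N^{-1}$ normalisation from the start.

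A minor point: your claim that ``hypothesis (2) being non-vacuous forces $R\lesssim|\eta|^{(1-\eps_3)/2}$'' is backwards — non-emptiness of the interval $[R^{-2}|\eta|^{-1},|\eta|^{-\eps_3}]$ gives a \emph{lower} bound $R\geq|\eta|^{(\eps_3-1)/2}$, vacuous for $R\geq1$. You do not actually need an upper bound on $R$ to absorb the extra $R^{k\eps_4}$ factor from the frequency rescaling: since $\eps_4<1$, that factor is simply bounded by $R^k$, which the statement already permits.
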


\begin{proof}
	We begin by altering assumption (2). We have that
	$$ \mu_j([x-\sigma, x+\sigma]) \leq \,\sigma^{\eps_0/2} $$
	for $\sigma \in [R^{-2}|\eta|^{-1},|\eta|^{-\varepsilon_2}/2]$ by using (2).
Define a measure $\mu_j$ on $\R$ by
$$\mu_j(A) = N^{-1} \sharp \{\b \in \cZ_j : \zeta_j(\b) \in A\}, \quad A \subset \R.$$
Then $\mu_j(\R) \leq R$ and by the assumptions (1) and (2) of the lemma we are about to prove, we have that the measure $\mu_j$ is a Borel measure on $[R^{-1},R]$ and that
$$(\mu_j \times \mu_j)(\{(x,y) \in \R^2 : |x-y| \leq \sigma\}) \leq \sigma^{\eps_0}$$
for all $\sigma \in [R^{2}|\eta|^{-1},|\eta|^{-\eps_2}]$. Then to prove the claim, we just need to check that the Fourier transform of the multiplicative convolutions of $\mu_j$ satisfies:
$$|(\mu_1 \otimes \dots \otimes \mu_k)\,\,\hat{ }\,\,(\eta)| \leq R^k|\eta|^{-\varepsilon_2}.$$
The rate of decay to be found will be given by
$$ \varepsilon_2:= \frac{1}{10}\min(\varepsilon_4,\varepsilon_3) $$
where $\varepsilon_3$ and $\varepsilon_4$ are given in Lemma \ref{lma:keybourgain}.

Fix $\ell \in \N$ such that $2^{\ell} < R \leq 2^{\ell+1}$. Then $\supp \mu_j \cap [R^{-1},R]$ can be covered by intervals of the form $I^{[i]}:=[2^{i-1},2^i]$ for $i=-l,...,l,l+1$. Let $\mu_j^{[i]}$ be $\mu_j$ restricted to $I^{[i]}$. Thus writing the re-scaling map
$$S_{i}(x) = 2^{-i}x, \quad x \in \R,$$
we have that the measure $\nu_j^{[i]} = S_{i}(\mu_j^{[i]})$ is supported on $[\tfrac{1}{2},1]$. Moreover, it satisfies
\begin{align*}
(\nu_j' \times \nu_j')(\{(x,y) \in \R^2 : |x-y| \leq \sigma\}) & \leq (\mu_j \times \mu_j)(\{(x,y) \in \R^2 : |x-y| \leq 2^i\sigma\})\\
& \leq (2^i\sigma)^{\varepsilon_0}\leq \sigma^{\varepsilon_0/2}
\end{align*}
where we use the fact that $2R \leq \sigma^{-1/2}$ which holds by assuming that $|\eta| > 4$. We know that the main assumption is satisfied for $\sigma \in [2^i |\eta|^{-1}, 2^i |\eta|^{\varepsilon_3}]$, so for the rescaled measure, we get the main assumption for the required range $\sigma \in [|\eta|^{-1},|\eta|^{\varepsilon_3}]$
 We will use that fact that
$$(\nu_1^{[i_1]} \otimes \dots \otimes \nu_k^{[i_k]})\,\,\hat{ }\,\,\Big(\eta\prod_{j=1}^{k}2^{-i_j}\Big) = (\mu_1^{[i_1]} \otimes \dots \otimes \mu_k^{[i_k]})\,\,\hat{ }\,\,(\eta).$$
Each $\mu_j$ is a sum of at most $2l+2$ of the restricted measures $\mu_j^{[i]}$, so the Fourier transform $(\mu_1\otimes \dots \otimes \mu_k)\,\,\hat{ }\,\,(\eta)$ decomposes into at most $(2l+2)^k$ terms consisting of Fourier transforms $(\mu_{1}^{[i_1]} \otimes \dots \otimes \mu_{k}^{[i_k]})\,\,\hat{ }\,\,(\eta)$ going through all the possible restrictions $\mu_j^{[i]}$. Hence if we can prove
$$|(\nu_1^{[i_1]} \otimes \dots \otimes \nu_k^{[i_k]})\,\,\hat{ }\,\,(\eta)| \leq C^*|\eta|^{-\varepsilon_2}$$
for some constant $C^* > 0$ only depending on $k$, the triangle inequality gives
$$|(\mu_1\otimes \dots \otimes \mu_k)\,\,\hat{ }\,\,(\eta)| \leq 2(2l+1)^k C^*|2^{-lk}\eta|^{-\varepsilon_2}\lesssim R^{k}C^*|\eta|^{-\varepsilon_2} $$

Thus let us assume from the start that $\mu_j$ is supported on $[\tfrac{1}{2},1]$. As in \cite{bourg}, let us first argue that it is enough to consider the case $\mu_1 = \mu_2 = \dots = \mu_k$. Given $\lambda = (\lambda_1,\dots,\lambda_k) \in [0,1]^k$, write
$$G(\lambda) := (\mu_{\lambda} \otimes \dots \otimes \mu_{\lambda})\,\,\hat{ }\,\,(\eta) = \widehat{\mu_\lambda^{\otimes k}}(\eta).$$
and the linear combination 
$$\mu_\lambda = \lambda_1 \mu_1 + \dots + \lambda_k \mu_k.$$
Expanding $\widehat{\mu_\lambda^{\otimes k}}(\eta)$ using the definition of $\mu_\lambda$ as a weighted sum of $\mu_k$'s, we see that it contains at most $k^k$ terms involving multiplicative convolutions of $\mu_j$ with coefficients given by products of $\lambda_1,\dots,\lambda_k$. 
Then if we know the claim for $\mu_1 = \dots = \mu_k$, then we can apply it to $\mu_\lambda$ and obtain 
$$\sup_{\lambda \in [0,1]^k} |G(\lambda)| \leq |\eta|^{-\varepsilon_2}.$$
From this we see that as the map $G$ is a polynomial of degree $k$, so there is a constant $C^* > 0$ depending on $k$
$$\frac{1}{k!} |\partial_{\lambda_1} \dots \partial_{\lambda_k} G(\lambda)|_{\lambda = 0}| \leq C^*|\eta|^{-\varepsilon_2}.$$
However,
$$|(\mu_1 \otimes \dots \otimes \mu_k)\,\,\hat{ }\,\,(\eta)| = \frac{1}{k!} |\partial_{\lambda_1} \dots \partial_{\lambda_k} G(\lambda)|_{\lambda = 0}|,$$
so this gives the claim.

As for the case $\mu_1 = \mu_2 = \dots = \mu_k$, depending on the amount of mass $\mu_1$ has, we have two cases. 

If $\mu_1(\R) \geq |\eta|^{-\eps_3 \eps_0/10}$, choose an integer $N$ such that $N/2 \leq |\eta| \leq N$. The probability measure
$$\mu_0 = \frac{\mu_1}{\mu_1(\R)}$$
on $\R$ satisfies
$$\sup_x \mu_0(B(x,\sigma)) < \sigma^{\eps_0/2}$$
for all $\sigma \in [4R^{-2}N^{-1},N^{-\eps_3}]$. Similarly we have by applying the above for $\sigma := 4R^2\sigma$ (when $R>1$), we obtain this for $\sigma \in [N^{-1},4R^2N^{-\varepsilon_3}]$ by monotonicity of $\mu$, which holds for $|\eta|^{1-\varepsilon_3}\geq 16R^4$. Hence Lemma \ref{lma:keybourgain} proves the claim. Note that here the constant dependence does not change.

If $\mu_1(\R) \leq |\eta|^{-\eps_3 \eps_0/10}$, then one can use a trivial bound on exponential function in the integral convolution and triangle inequality to obtain the claim. The desired decay can be achieved by noting $k\geq1$ in this final case.
\end{proof}

\section{Total non-linearity, spectral gap and non-concentration} \label{sec:nonconcentration} In order to apply the sum-product estimate (Lemma \ref{lma:newexponential}) in our setting, we will need to verify a suitable non-concentration condition for the maps $\zeta_j = \zeta_{j,A} : \cR_n(\eps) \to \R$ which are defined by
$$\zeta_{j,\A}(\b) := e^{2\lambda n} f_{\a_{j-1} \b}'(x_{\a_j}), \quad \b \in \cR_n(\eps),$$
where $\A = \a_0 \a_1\dots \a_k \in \cR_n^{k+1}$ and $x_{\a_j}$ is the center point of the interval $I_{\a_j}$. This condition will be possible thanks to a contraction theorem for complex transfer operators on the Banach space of Lipschitz functions, which we will now state.

Let $C^{\mathrm{Lip}}(K)$ be the set of all complex valued Lipschitz functions $g$ on $K$. For $b \in \R$, $b \neq 0$, define the Lipschitz $b$-norm
$$\|g\|_{\mathrm{Lip},b} := \|g\|_\infty + \frac{\mathrm{Lip}(g)}{|b|},$$
where $\mathrm{Lip}(g)$ is the optimal Lipschitz constant of $g$. Given a function $\psi : I = \bigcup_{a = 1}^N I_a \to \C$, define the transfer operator $\mathcal{L}_{\psi}$ on the Banach space $C^{\mathrm{Lip}}(K)$ by
$$\mathcal{L}_{\psi} g(x) :=  \sum_{y : T(y) = x} e^{\psi(y)} g(y)$$
If $s \in \C$, then in the case of the potential $\psi = \phi-s \tau$ for the distortion function $\tau = \log |T'|$, the following was proved by Stoyanov in \cite{Stoyanov}:

\begin{thm}[Contraction of transfer operators]\label{thm:c1cont}
Under the assumptions of Theorem \ref{thm:nonlinear}, the following holds. Let $0 < \Xi < 1$. Then there exists $C_\Xi > 0$, $\delta_1(\Xi) > 0$, $t_0(\Xi) > 0$ such that for all $s \in \C$ of the form $s = \delta - ib$, where and $|\mathrm{Im\,}s| = |b| \geq t_0(\Xi)$ we have for all $f \in C^{\mathrm{Lip}}(K)$ and $m \in \N$ that
	$$  \|\cL_{\phi-s\tau}^m f \|_{\mathrm{Lip},b}\leq C_{\Xi}|\mathrm{Im\,} s|^{\Xi}e^{-\delta_1(\Xi) m}  \|f\|_{\mathrm{Lip},b},$$
	where $\delta = \Hd \mu > 0$ is the unique real number satisfying $P(\phi-\delta \tau) = 0$ and $P$ is the topological pressure on $K$. 
\end{thm}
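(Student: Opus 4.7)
The plan is to follow the Dolgopyat-Stoyanov approach to contraction of complex transfer operators, so the overall strategy is to reduce the Lipschitz-norm contraction to an $L^2$-contraction via a Doeblin–Fortet–Lasota–Yorke type inequality, and to establish the $L^2$-contraction through Dolgopyat's cancellation mechanism, which is powered by the total non-linearity of $\tau$ via the NLI condition from \cite{AGY}.

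First I would normalise the operator at the real parameter $s = \delta$. Since $P(\phi - \delta\tau) = 0$ by Bowen's pressure equation, Ruelle-Perron-Frobenius provides a positive H\"older eigenfunction $h$ and eigenmeasure $\nu$ for $\cL_{\phi-\delta\tau}$ with eigenvalue $1$, and after the standard cohomological change of potential $\phi - \delta\tau \leadsto \phi - \delta\tau + \log h - \log h \circ T$ we may assume $\cL_{\phi-\delta\tau} 1 = 1$ and that $\mu$ is the invariant probability measure. Writing $s = \delta - ib$, the twist identity $\cL_{\phi-s\tau}^m f = \cL_{\phi-\delta\tau}^m\bigl(e^{ib S_m \tau} f\bigr)$ shows that the contraction question for $\cL_{\phi - s\tau}^m$ reduces to showing cancellation in the oscillatory sum produced by the phase $e^{ib S_m\tau}$ on preimages of the same base point.

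Next I would establish the a priori bounds on the family $\{\cL_{\phi - s\tau}\}_{|b|\geq t_0}$ in the $b$-twisted Lipschitz norm $\|\cdot\|_{\mathrm{Lip},b}$. The point of dividing the Lipschitz seminorm by $|b|$ is that one gets a uniform Lasota–Yorke inequality of the shape
\begin{equation*}
\|\cL_{\phi-s\tau}^m f\|_{\mathrm{Lip},b} \leq A\,\gamma^{-m}\,\|f\|_{\mathrm{Lip},b} + C\,|b|\,\|f\|_\infty,
\end{equation*}
where the constant $A$ depends only on bounded distortion (condition (3)) and the uniform expansion (condition (1)), while the $|b|$ factor accounts for the fact that differentiating the phase $e^{ib S_m\tau}$ produces a factor of $b$. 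Combining this with a uniform $L^\infty$ bound on $\cL_{\phi-s\tau}^m$, the problem reduces to proving exponential decay of $\|\cL_{\phi - s\tau}^m f\|_{L^2(\mu)}$ at a rate independent of $|b|$, up to a $|b|^\Xi$ loss.

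The main obstacle, and the heart of Dolgopyat's method as sharpened by Naud and Stoyanov, is this $L^2$ contraction. The strategy is to build Dolgopyat operators $\mathcal{N}_J$ indexed by dense collections of cylinders $J$, such that for all Lipschitz $H$ the iterated modulus $|\cL_{\phi-s\tau}^m f|$ is pointwise dominated by $\mathcal{N}_J(|f|+ H)$ with $\mathcal{N}_J$ strictly contracting in $L^2(\mu)$ by a definite factor $1-\eta$, uniformly for $|b|$ large. The existence of these operators requires finding two cylinders of length $\sim\log|b|$ whose Birkhoff sums $S_n\tau$ exhibit a non-negligible phase difference modulo $2\pi/b$; this is exactly the Non-Local Integrability condition on $\tau$. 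Total non-linearity (condition (4)) feeds into NLI through Avila-Gou\"ezel-Yoccoz \cite[Proposition 7.5]{AGY}: if $\tau$ were locally integrable, one could cohomologously trivialise its derivative structure, producing the forbidden $C^1$ coboundary relation $\tau = \psi_0 + g\circ T - g$. Once NLI is in hand, iterating the Dolgopyat operator $M = M(\Xi)$ times yields $\|\cL_{\phi-s\tau}^m f\|_{L^2} \leq (1-\eta)^{m/M}\|f\|_{\mathrm{Lip},b}$, and plugging this back into the Lasota–Yorke inequality and re-iterating upgrades the decay to the Lipschitz norm. The factor $C_\Xi |b|^{\Xi}$ emerges from interpolating between the trivial bound $\|\cL^m\|_{\mathrm{Lip},b}\lesssim |b|$ and the genuine contraction: choosing the number of Dolgopyat iterations to be $\Xi\log|b|$ balances the accumulated Lipschitz loss $|b|^{\Xi}$ against the $L^2$ gain $e^{-\delta_1(\Xi) m}$, giving the stated estimate with $\delta_1(\Xi) > 0$ and $t_0(\Xi)$ large enough for the Dolgopyat construction to start.
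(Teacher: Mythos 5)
Your strategy is the right one, but it is worth being clear that the paper does not prove this theorem at all: it is quoted from Stoyanov \cite[Theorems 1.1 and 5.1]{Stoyanov} (generalising Naud \cite[Theorem 2.3]{Naud} from $\phi=0$ to general Gibbs potentials), and the only work the paper does is in Remark \ref{rmk:NLI}, namely verifying Stoyanov's hypothesis: total non-linearity $\Rightarrow$ the asymptotic aperiodicity/NLI condition (via Avila--Gou\"ezel--Yoccoz \cite[Proposition 7.5]{AGY} and Dolgopyat's Anosov alternative) $\Rightarrow$ UNI/LNIC. Your sketch instead outlines the internal Dolgopyat--Naud--Stoyanov machinery (RPF normalisation, the twist $\cL_{\phi-s\tau}^m f=\cL_{\phi-\delta\tau}^m(e^{ibS_m\tau}f)$, a Lasota--Yorke inequality in $\|\cdot\|_{\mathrm{Lip},b}$, Dolgopyat operators giving $L^2(\mu)$ contraction from NLI, and the $|b|^{\Xi}$ loss from the $\sim\log|b|$ start-up of the cancellation), which is in substance the proof of the cited result rather than a different argument; what the citation buys the paper is precisely that it never has to construct the Dolgopyat operators or prove the cancellation lemma, which is where essentially all of the difficulty lives and which your sketch asserts rather than establishes --- in particular you do not engage with the regularity/doubling-type properties of the Gibbs measure $\mu_\phi$ that Stoyanov needs in the $L^2$ step and which are the reason the paper cites Stoyanov instead of Naud (Naud's version would force the potential to be $-\delta\tau$ with $\delta$ tied to $\dim_{\mathrm H}K$, insufficient for general equilibrium states here). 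Two smaller points: in the $b$-normalised norm the Lasota--Yorke inequality should read $\|\cL_{\phi-s\tau}^m f\|_{\mathrm{Lip},b}\leq A\gamma^{-m}\|f\|_{\mathrm{Lip},b}+C\|f\|_\infty$ --- the factor $|b|$ you wrote in the second term is exactly what dividing the Lipschitz seminorm by $|b|$ is designed to remove --- and your appeal to ``$P(\phi-\delta\tau)=0$ by Bowen's equation'' simply takes the normalisation stated in the theorem at face value, which is fine for the comparison but is not Bowen's equation for $\dim_{\mathrm H}K$.
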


Theorem \ref{thm:c1cont} is a generalisation of the contraction theorem of Naud \cite[Theorem 2.3]{Naud}, where Naud proved his result in the case of $\phi = 0$. This result works better with the Gibbs measures we are dealing with. This version is given in \cite[Theorem 1.1 and Theorem 5.1]{Stoyanov}. However, the statement of Theorem \ref{thm:c1cont} in \cite{Stoyanov} was done in the language of $C^2$ Axiom A flows on $C^2$ complete Riemann manifolds, where a \textit{Local Non-Integrability Condition} (LNIC) is assumed from the flow and the $\tau$ is a first return time function for the flow. Using Markov partitions of rectangles and projections along unstable manifolds, see e.g. \cite[Definition 2.6]{AGY}, one can realise the Anosov flow as a suspension flow over the roof function $\tau$. We will apply this in the case of the roof function $\tau = \log |T'|$.

Stoyanov points out that LNIC holds if the Anosov flows satisfies the well-known \textit{Uniform Non-Integrability} (UNI) for Anosov flows by Chernov \cite{Chernov}, which means that the means that the roof function $\tau$ for the Anosov flow has oscillations. Here we use the following symbolic definition, which is also used in \cite[(7.6) and Definition 2.3]{AGY}, Baladi-Vall\'ee \cite{BV} and Naud \cite[Proposition 5.5]{Naud} and in the notation of this article goes as follows.

\begin{definition} We say that the function $\tau$ satisfies \textit{Uniform Non-Integrability} (UNI) if there exists $c > 0$ and $n_0 \in \N$ such that for all $n \geq n_0$ there exists $\a,\b \in \cA^n$ such that for all $x \in K$:
\begin{align} \label{eq:asymp}\Big| \frac{d}{dx} (S_n \tau(f_\a(x)) - S_n \tau(f_\b(x)) )\Big| \geq c.\end{align}
\end{definition}

Recall that $T$ is totally non-linear if it is not possible to write 
$$\tau = \psi_0 + g - g \circ T$$ 
on $I$ for $\psi_0 : I \to \R$ constant on every $I_a$, $a \in \cA$, and $g \in C^1(I)$. The following lemma connects total non-linearity to UNI:

\begin{lemma}\label{lma:NLI}
\begin{itemize}
 \item[(1)] If $\bigcup_{a \in \cA} I_a = [0,1]$ and $f_a$ are $C^2$, then $T$ is totally non-linear if and only if $\tau = \log |T'|$ satisfies \emph{UNI}.
\item[(2)] If $\{I_a : a \in \cA\}$ are disjoint and $f_a$ are analytic, then $T$ is totally non-linear if and only if $\tau = \log |T'|$ satisfies \emph{UNI}.
\end{itemize}
\end{lemma}

The proof Lemma \ref{lma:NLI}(1) was done for example by Avila, Gou\"ezel, Yoccoz  \cite[Proposition 7.5]{AGY}, where the statement is for expanding $C^2$ Markov maps on John domains, which is our setting in dimension one when $\bigcup_{a = 1}^N I_a= [0,1]$. This same proof goes also through in the case (2) where $\{I_a : a \in \cA\}$ are disjoint if we assume $f_a$ are analytic. This was also proved in Naud \cite[Lemma 4.3 and Proposition 5.1]{Naud} using the notion of \textit{Non-Local Integrability (NLI)}. The proof that the total non-linearity is equivalent to \eqref{eq:asymp} goes back to the notion of \textit{Anosov alternative} by Dolgopyat \cite{Dolgopyat,Dolgopyat1}. Thus we can apply Theorem \ref{thm:c1cont} under the assumptions of Theorem \ref{thm:nonlinear}.

Let us now use Theorem \ref{thm:c1cont} to prove the main non-concentration estimate needed for the sum-product estimates. Before that we will need to fix the parameters $R$ and the range of $\sigma$ we consider needed for Lemma \ref{lma:newexponential}:

\begin{remark}\label{rmk:parameters}
\begin{itemize}
\item[(1)] 
For $n \in \N$ and $\eps > 0$ the number
$$R = R(n,\eps) := 16^2CC_{\varepsilon,n}^{3\lambda},$$
where $C > 0$ is the constant satisfying the Gibbs condition of $\mu$, recall \eqref{eq:Gibbs} and recall $\lambda = \int \tau \, d\mu$ is the Lyapunov exponent of $\mu$. Then for the map
$$ \zeta_{j,{\A}}({\b}) = e^{2\lambda n}f_{{\bf a_{j-1}b}}'(x_{{\bf a_j}}). $$
we see that
$$\zeta_{j,\A}(\b) \in [R^{-1},R].$$
Indeed, the chain rule gives
$$ \zeta_{j,{\A}}({\b}) = e^{2\lambda n}f_{{\bf a_{j-1}}}'(f_{\b}x_{{\bf a_j}})f_{\b}'(x_{\a_j}) $$
so we can apply Lemma \ref{lma:regularmarkov} and the fact that $f'_{\a_{j-1}}$ and $f_{\b}'$ must both be either positive or negative because they are defined by words of the same length. 
\item[(2)] Fix now any $\Xi \in (0,1)$ and let $\delta_1(\Xi) > 0$ be the exponent from Theorem \ref{thm:c1cont}. Define
$$\eps_0 := \min\{\delta_1(\Xi)/2,\lambda/4\} > 0.$$
Then $\eps_0 < \lambda/2$ and this now fixes the regular words 
$$\cR_n(\eps) = \cR_n(\eps,\eps_0),$$
blocks 
$$\cR_n^k(\eps) = \cR_n^k(\eps,\eps_0)$$ 
and parameters
$$J_n(\eps) = J_n(\eps,\eps_0) = \{ \eta \in \mathbb{R}:e^{\eps_0 n/2} \leq |\eta| \leq C_{\varepsilon,n} e^{\eps_0 n}  \},$$
from Proposition \ref{lma:exponentialsum}, which all implicitly depend on $\eps_0 > 0$.
\end{itemize} 
\end{remark}

\begin{prop}\label{lma:distribution}
Under the assumptions of Theorem \ref{thm:nonlinear}, the following holds. Let $\eps_3 > 0$ is from Lemma \ref{lma:newexponential}. Write $\mathcal{W} \subset \mathcal{R}_n^{k+1}(\eps)$ to be the set of ``well-distributed blocked words'' ${\A}$ defined such that for all $j=1,\dots,k$, $\eta \in J_n(\eps)$ and $\sigma \in [R^{-2}|\eta|^{-1},|\eta|^{-\eps_3}]$, where we have that
	$$ \sharp \{ ({\bf b, c}) \in \mathcal{R}_n(\eps)^2: |\zeta_{j,{\A}}({\b})-\zeta_{j,{\A}}({\c})| \leq \sigma \}  \leq \sharp \cR_n(\eps)^2 \sigma^{c_0/2}. $$
Then most blocks are well-distributed, so for some $\kappa_0>0$,
	$$ e^{-\lambda (k+1)\delta n}|\mathcal{R}_n^{k+1}(\eps)\setminus \mathcal{W}| \leq C_{\varepsilon,n}^{2\kappa_0} \sigma^{c_0/4}.$$
\end{prop}

We will now give the key non-concentration estimate for distortions as a consequence of Theorem \ref{thm:c1cont}.

\begin{lemma}[Non-concentration]\label{thm:nonconc} There exists $c_0 > 0$ and $\kappa_0 > 0$ such that for all $\eps > 0$, $n \in \N$, $\eta \in J_n(\eps)$, $\sigma \in [R^{-2}|\eta|^{-1},|\eta|^{-\eps_3}]$, $x \in [0,1]$ we have
$$\sharp\{(\a,\b,\c) \in \cR_n(\eps)^3 : |e^{2\lambda n}f_{{\bf ab}}'(x)-e^{2\lambda n}f_{{\bf ac}}'(x)| \leq \sigma \} \lesssim C_{\eps,n}^{\kappa_0} \sigma^{c_0} \sharp \cR_n(\eps)^3,$$
Here $R$ is fixed in Remark \ref{rmk:parameters} and $\eps_3 > 0$ is from Lemma \ref{lma:newexponential}.
\end{lemma}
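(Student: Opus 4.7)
The plan is to convert the derivative-difference condition into a condition on Birkhoff sums of $\tau$, express the resulting count as a squared exponential sum via Fourier inversion of a smoothed indicator, identify this sum with an iterate of the complex transfer operator $\cL_{\phi+ib\tau}$, and apply the Stoyanov--Dolgopyat contraction of Theorem \ref{thm:c1cont}.

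First, writing $e^{2\lambda n}|f'_{\a\b}(x)|=e^{G_{\a\b}(x)}$ with $G_{\a\b}(x):=2\lambda n - S_{2n}\tau(f_{\a\b}(x))$, regularity of $\a,\b\in\cR_n(\eps)$ gives $|G_{\a\b}(x)|\leq O(\eps n)$, so the inequality $|e^u-e^v|\geq e^{-\max(|u|,|v|)}|u-v|$ (after splitting by the sign of $f'$, which is constant within same-length words) converts the derivative-difference condition into
\[
|H_\a(\b;x)-H_\a(\c;x)|\leq \sigma':=C_{\eps,n}^{O(1)}\sigma,\quad H_\a(\b;x):=S_n\tau(f_\a(f_\b(x)))+S_n\tau(f_\b(x)),
\]
using $S_{2n}\tau(f_{\a\b}(x))=S_n\tau(f_\a(f_\b(x)))+S_n\tau(f_\b(x))$. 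It suffices to estimate $N_\a:=\sharp\{(\b,\c)\in\cR_n(\eps)^2:|H_\a(\b;x)-H_\a(\c;x)|\leq\sigma'\}$ and sum over $\a$.

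Second, take a Schwartz majorant $\psi_{\sigma'}\geq\mathbf{1}_{[-\sigma',\sigma']}$. Using the near-constancy $w_\b(x):=e^{S_n\phi(f_\b(x))}\asymp C_{\eps,n}^{\pm O(1)}e^{-\delta\lambda n}$ on $\cR_n(\eps)$ (Lemma \ref{lma:regularmarkov}), Fourier inversion gives
\[
N_\a\lesssim C_{\eps,n}^{O(1)}e^{2\delta\lambda n}\int\widehat{\psi}_{\sigma'}(r)\,|T_w(r;\a,x)|^2\,dr,\quad T_w(r;\a,x):=\sum_{\b\in\cR_n(\eps)}w_\b(x)e^{2\pi i rH_\a(\b;x)}.
\]
Setting $b=2\pi r$ and $h_\a(y):=e^{ibS_n\tau(f_\a(y))}$, the inner sum equals $\cL^n_{\phi+ib\tau}h_\a(x)$ up to a tail from words outside $\cR_n(\eps)$ bounded by $\mu([0,1]\setminus R_n(\eps))\lesssim e^{-\delta_0(\eps/2)\eps_0 n/2}$ via Theorem \ref{thm:largedev}. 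Bounded distortion gives $\mathrm{Lip}(S_n\tau\circ f_\a)\leq BD/(\gamma-1)$ uniformly, so $\|h_\a\|_{\mathrm{Lip},b}=O(1)$, and since $P(\phi)=0$ the operator $\cL_{\phi+ib\tau}$ is normalized; applying Theorem \ref{thm:c1cont} (whose underlying Dolgopyat--Stoyanov argument is valid for any potential with normalized real part, here with $s=-ib$ in place of $s=\delta-ib$) yields $|\cL^n_{\phi+ib\tau}h_\a(x)|\lesssim C_\Xi|b|^\Xi e^{-\delta_1(\Xi)n}$ for $|b|\geq t_0(\Xi)$; for $|b|<t_0$ the trivial bound $|T_w|\lesssim 1$ suffices.

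Third, splitting the $r$-integral at $|b|=t_0$ and using the rapid decay of $\widehat{\psi}_{\sigma'}(r)=\sigma'\widehat{\psi}(\sigma' r)$ outside $[-1/\sigma',1/\sigma']$, the weighted integral is bounded by $C_{\eps,n}^{O(1)}(\sigma'+e^{-\delta_0\eps_0 n}+e^{-2\delta_1(\Xi)n}\sigma^{-2\Xi})$. On the range $\sigma\in[R^{-2}|\eta|^{-1},|\eta|^{-\eps_3}]$ with $|\eta|\leq C_{\eps,n}e^{\eps_0 n}$ one has $|\log\sigma|\leq(\eps_0+O(\eps))n$; since $\eps_0\leq\delta_1(\Xi)/2$ was fixed in Remark \ref{rmk:parameters}(2), choosing $c_0,\Xi>0$ sufficiently small (depending only on $\delta_0,\delta_1,\eps_0$) forces each of the three terms to be $\lesssim C_{\eps,n}^{O(1)}\sigma^{c_0}$. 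Summing over $\a\in\cR_n(\eps)$ via $\sharp\cR_n(\eps)\asymp C_{\eps,n}^{O(1)}e^{\delta\lambda n}$ yields the lemma, with $\kappa_0$ absorbing the accumulated $C_{\eps,n}^{O(1)}$ factors.

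The main obstacle is the application of the contraction in step two: the stated Theorem \ref{thm:c1cont} is for $\cL_{\phi-(\delta-ib)\tau}$, while the Fourier expansion of the Gibbs-weighted sum naturally produces $\cL_{\phi+ib\tau}$. Both operators have normalized real part ($P(\phi-\delta\tau)=0$ and $P(\phi)=0$), so the Stoyanov--Dolgopyat spectral-gap machinery gives the same polynomial-in-$|b|$ contraction rate for both; however, this extension of the cited theorem must be verified explicitly, or else one can absorb the factor $e^{\delta S_n\tau(f_\b(x))}\asymp C_{\eps,n}^{\pm O(1)}e^{\delta\lambda n}$ on regular words to rewrite $T_w$ in terms of $\cL_{\phi-(\delta-ib)\tau}h_\a$ with the same test function, introducing only further $C_{\eps,n}^{O(1)}$ corrections that fold into $\kappa_0$.
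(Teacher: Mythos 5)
Your overall strategy is the same as the paper's: convert the derivative--difference condition into a condition on logarithms, majorize the indicator of a small window by a Schwartz bump, use Fourier inversion to turn the count into an integral against an oscillating sum, recognise that sum (after extending from regular words to all of $\cA^n$ by nonnegativity) as an iterate of a complex transfer operator, and then apply the Dolgopyat--Stoyanov contraction of Theorem~\ref{thm:c1cont}.  Two of your execution choices differ harmlessly from the paper: (a) you work at the single scale $n$, whereas the paper first splits $\a\b$ into a prefix $\e$ of length $2n-m$ and suffix $\d$ of length $m$ with $m\asymp\frac{1}{\eps_0}\log\sigma^{-1}$ and runs the argument at scale $m$ --- your estimates still close because $\eps_0\le\delta_1(\Xi)/2$ forces $e^{-2\delta_1 n}\sigma^{-2\Xi}\lesssim\sigma^{c_0}$ over the admissible range of $\sigma$; and (b) you square the weighted exponential sum directly, whereas the paper introduces the weight $w_\d|f'_{\e\d}|^\delta$ through a Cauchy--Schwarz step and pairs it with the dual sum $\sum_\d (w_\d|f'_{\e\d}|^\delta)^{-1}$.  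Either device serves the same purpose.

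The genuine issue, which you correctly flag, is the choice of complex potential.  The paper recognises the weighted sum as
$\cL^m_{\phi-s\tau}g_\e(x)$ with $s=\delta-ib$ and $g_\e(z)=|f'_\e(z)|^{s}$, exactly matching the statement of Theorem~\ref{thm:c1cont}, and this is only possible because the factor $|f'_{\e\d}|^{\delta}$ is present in the weight.  Your $T_w(r)=\sum_\b w_\b e^{ibH_\a(\b;x)}$ has no such factor and equals $\cL^n_{\phi+ib\tau}h_\a(x)$ (real part $\phi$, not $\phi-\delta\tau$), so the cited theorem does not apply without re-running Stoyanov's argument for the new potential.  Your proposed ``absorption'' fix does not repair this as stated: $|f'_\b(x)|^{-\delta}$ is within $C_{\eps,n}^{O(1)}$ of $e^{\delta\lambda n}$ on regular $\b$, but it is a \emph{variable} factor and cannot be pulled out of the oscillating sum --- the identity
\[
\sum_\b w_\b e^{ibH_\a(\b)}\ \stackrel{?}{\asymp}\ e^{\delta\lambda n}\sum_\b w_\b|f'_\b|^\delta e^{ibH_\a(\b)}
\]
simply fails because the two sums exhibit different cancellation.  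The correct repair, which yields exactly what Theorem~\ref{thm:c1cont} covers, is to carry the factor through the majorant step from the start: since $\psi_{\sigma'}\ge 0$ and for $\b,\c\in\cR_n(\eps)$ one has $1\lesssim C_{\eps,n}^{O(1)}e^{4\delta\lambda n}\,w_\b|f'_\b(x)|^\delta\, w_\c|f'_\c(x)|^\delta$, you get
\[
N_\a\lesssim C_{\eps,n}^{O(1)}e^{4\delta\lambda n}\sum_{\b,\c\in\cA^n}w_\b|f'_\b|^\delta\,w_\c|f'_\c|^\delta\,\psi_{\sigma'}(H_\a(\b)-H_\a(\c))
= C_{\eps,n}^{O(1)}e^{4\delta\lambda n}\int\widehat{\psi}_{\sigma'}(r)\,\big|\cL^n_{\phi-(\delta-ib)\tau}h_\a(x)\big|^2\,dr,
\]
after extending the $\b,\c$-sum over all of $\cA^n$ (legitimate because every term is nonnegative), with $b=2\pi r$.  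This is now the transfer operator in the exact form covered by the cited theorem, the test function $h_\a$ still has $\|h_\a\|_{\mathrm{Lip},b}=O(1)$, the low-frequency range $|b|<t_0$ contributes the trivial bound $\cL^n_{\phi-\delta\tau}\mathbf{1}(x)\lesssim e^{-\delta\lambda n}$ giving the term of order $\sigma'$, and the bookkeeping with the extra $e^{2\delta\lambda n}$ against $\sharp\cR_n(\eps)^2\asymp C_{\eps,n}^{O(1)}e^{2\delta\lambda n}$ goes through exactly as before.  With this correction the proof is complete and parallels the paper's.
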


\begin{proof} Fix $\Xi > 0$ and let $\eps_0 = \min\{\delta_1(\Xi)/2,\lambda/4\}$ be the constant defined in Remark \ref{rmk:parameters}(2), where $\delta_1(\Xi) > 0$ comes from Theorem \ref{thm:c1cont}. For this $\Xi$, write also $t_0 = t_0(\Xi) > 0$ from Theorem \ref{thm:c1cont}. 

Choose $m \in \N$ such that $e^{-\eps_0 (m-1)}\leq \sigma \leq e^{-\eps_0 m}$. We will first prove that for all $y \in \R$ with $y\pm e^{-\eps_0 m} \in [R^{-1},R]$, $\e \in \tilde \cR_{2n-m}(\eps)$ and $x \in I$ we have
\begin{align} \label{eq:scalem}\sharp\{\d \in \tilde \cR_m(\eps) : e^{2\lambda n}f_{{\bf e d}}'(x) \in B(y,e^{-\eps_0 m})\} \lesssim  C_{\eps,m}^{\kappa_0} e^{-c_0 m} \sharp \tilde\cR_m(\eps),\end{align}
where
$$\tilde \cR_m(\eps) := \Big\{\d \in \cA^m :  I_\d \subset A_m(2\eps) \Big\} \quad \text{and} \quad \tilde \cR_{2n-m}(\eps) := \Big\{\e \in \cA^{2n-m} : I_\e \subset A_{2n-m}(2\eps) \Big\}.$$
Then the cardinalities
$$\sharp \tilde \cR_m(\eps) \sim C_{2\eps,n} \sharp \cR_m(\eps) \quad \text{ and } \quad \sharp\tilde \cR_{2n-m}(\eps) \sim C_{2\eps,n} \sharp\cR_{2n-m}(\eps)$$ 
by Lemma \ref{lma:regularmarkov} for the properties of regular words, where by $a \sim b$ we mean $b/c \leq a \leq cb$ for some constant $c > 0$. 

Lemma \ref{thm:nonconc} follows now from \eqref{eq:scalem} by first setting $\cP$ to be the set of pairs $(\e,y)$ such that $y\pm e^{-\eps_0 m} \in [R^{-1},R]$ and $\e \in \tilde \cR_{2n-m}(\eps)$, and bounding
\begin{align*}& \sharp\{(\a,\b,\c) \in \cR_n(\eps)^3 : |e^{2\lambda n}f_{{\bf ab}}'(x)-e^{2\lambda n}f_{{\bf ac}}'(x)| \leq \sigma \} \\
& \leq \sharp \cR_n(\eps) \sup_{\c \in \cR_n(\eps)} \sharp\{(\a,\b) \in \cR_n(\eps)^2 : |e^{2\lambda n}f_{{\bf ab}}'(x)-e^{2\lambda n}f_{{\bf ac}}'(x)| \leq \sigma \} \\
& \leq \sharp \cR_n(\eps) \sharp \tilde \cR_{2n-m}(\eps)  \sup_{(\e,y)  \in \cP} \sharp\{\d \in \tilde \cR_m(\eps) : e^{2\lambda n}f_{{\bf \e \d }}'(x) \in B(y, \sigma) \}
\end{align*}
 since every $\a\b$, for $\a,\b \in \cR_n(\eps)$ splits into a word $\a\b = \e \d$ with $\e := \a\b|_{2n-m} \in \tilde\cR_{2n-m}(\eps)$ and $\d := \sigma^{2n-m} (\a\b) \in \tilde \cR_m(\eps)$ using the quasi-Bernoulli property of the Gibbs measure $\mu$: since $\a\b = \e \d$, we have
$$\mu(I_{\e})\mu(I_\d) \lesssim \mu(I_{\a\b}) \lesssim \mu(I_{\e})\mu(I_\d)$$
and that the lengths
$$|I_{\e}| |I_\d| \lesssim |I_{\a\b}| \lesssim |I_{\e}||I_\d|.$$

Then fix $(\e,y) \in \cP$. Since $B(y,\sigma) \subset B(y,e^{-\eps_0 m})$ we have by \eqref{eq:scalem} that
 \begin{align*}
 \sharp\{\d \in \tilde \cR_m(\eps) : e^{2\lambda n}f_{{\bf \e \d }}'(x) \in B(y, \sigma) \} \lesssim C_{\eps,m}^{\kappa_0} e^{-c_0 m} \sharp \tilde\cR_m(\eps)
 \end{align*}
 Note that $e^{-c_0 m} = e^{-\frac{c_0}{\eps_0} \eps_0 m} \lesssim \sigma^{c_0/\eps_0}$ so the claim follows by the cardinality bounds of $\tilde \cR_{2n-m}(\eps)$ and $\tilde \cR_m(\eps)$ by setting in the statement of the lemma the exponent $c_0 > 0$ as $c_0/\eps_0$.

Let us now verify the non-concentration estimate \eqref{eq:scalem} we need.

\textbf{Step 1.}  Write $r := e^{-\eps_0 m}$. Since $y-r \geq R^{-1} > 0$, we know that  $e^{2\lambda n}f_{{\e \d}}'(x) \in B(y,r)$ if and only if 
$$-\log |f_{\e\d}'(x)| \in J := [2 \lambda n - \log(y+r),2\lambda n - \log(y-r)].$$ 
Note that the interval $J$ has length $|J| = \log\frac{y+r}{y-r}$. By the mean value theorem, we have
\begin{align}\label{eq:Jbound}2 R^{-1} r  \leq \frac{2r}{y+r} \leq |J| \leq \frac{2r}{y-r} \leq 2R r.\end{align}
Hence 
$$ \sharp\{\d \in \tilde \cR_m(\eps) : e^{2\lambda n}f_{{\e \d}}'(x) \in B(y,e^{-\eps_0 m})\} =  \sharp\{\d \in \tilde \cR_m(\eps) : -\log |f_{\e\d}'(x)| \in J\}$$
\textbf{Step 2.} Now let us approximate the indicator of $\chi_J$ by a mollifier $h \in C^2(\R)$ satisfying
\begin{itemize}
\item[(1)] $\chi_J \leq h$
\item[(2)] $\|h\|_1 \lesssim |J|$
\item[(3)] $\|h''\|_{L^1} \lesssim \frac{1}{|J|}.$
 \end{itemize}
This function can be obtained, for example using a scaled and translated Gaussian function
$$h(x) := e^{\frac{\pi}{4}} g_0\Big(\frac{x-x_J}{|J|}\Big), \quad \text{where} \quad g_0(x) := e^{-\pi x^2}.$$
where $x_J$ is the central point of $J$.

Since $\chi_J \leq h$, we have
$$ \sharp\{\d \in \tilde\cR_m(\eps) :  -\log |f_{\e\d}'(x)| \in J\}\leq \sum_{\d \in \tilde\cR_n(\eps) }h(-\log |f_{\e\d}'(x)|)^{1/2}$$
Recall that for $\d \in \cA^m$, we defined the Birkhoff weights $w_\d(x) = e^{S_n\phi(f_\d(x))}$. Using Cauchy-Schwartz inequality, we thus obtain
	\begin{align*} &\Big(\sum_{\d \in \tilde\cR_m(\eps) }h(-\log |f_{\e\d}'(x)|)^{1/2}\Big)^2 \\
	&\leq \Big(\sum_{\d \in \tilde\cR_m(\eps) } w_\d(x) |f_{\e\d}'(x)|^{\delta}h(-\log |f_{\e\d}'(x)|)\Big)\Big(\sum_{\d \in \tilde\cR_m(\eps) }\frac{1}{w_\d(x) |f_{\e\d}'(x)|^\delta}\Big) \end{align*}

\textbf{Step 3.} Taking the inverse Fourier transform of $\widehat{h}$ gives us for all $x \in I$, $\d \in \cA^m$ and $m \in \N$ that
\begin{align*}
h(-\log |f_{\e\d}'(x)|)=\int e^{-2\pi i \xi \log |f_{\e\d}'(x)|}\hat{h}(\xi)\,d\xi
\end{align*}
Therefore
\begin{align*} \sum_{\d \in \tilde \cR_m(\eps)} w_\d(x) |f_{\e\d}'(x)|^\delta h(-\log |f_{\e\d}'(x)|) &= \sum_{\d \in \tilde \cR_m(\eps)} w_\d(x) |f_{\e\d}'(x)|^\delta \int \hat{h}(\xi)e^{-2\pi i \xi \log |f_{\a\b}'(x)|}\,d\xi\\
&=\int \hat{h}(\xi)\sum_{\d \in \tilde \cR_m(\eps)} w_\d(x) |f_{\e\d}'(x)|^\delta e^{-2\pi i \xi \log |f_{\e\d}'(x)|}\,d\xi 
\end{align*}
Split the integration now over $|\xi| > t_0 / 2\pi$ and $|\xi| \leq t_0 / 2\pi$, were $t_0 = t_0(\Xi) > 0$ is from Theorem \ref{thm:c1cont} needed for the contraction of the transfer operator $\cL_{\phi-s\tau}$ in the range of $s \in \C$ with $|\mathrm{Im} \,s| \geq t_0(\Xi)$.

\textbf{Step 4.} If $|\xi| > t_0 / 2\pi$, we will estimate as follows. Inside the integral, use the estimate
$$\sum_{\d \in \tilde\cR_m(\eps) } w_\d(x) |f_{\e\d}'(x)|^\delta h(-\log |f_{\e\d}'(x)|)\leq \sum_{\d \in \cA^m} w_\d(x) |f_{\e\d}'(x)|^\delta h(-\log |f_{\e\d}'(x)|) $$
We can iterate the definition of the complex transfer operator applied for $g_\e$ defined by
$$g_{\e}(z) := |f'_{\e}(z)|^s, \quad z \in [0,1]$$
to obtain with $s = \delta-2\pi \xi i$ that
\begin{align}\label{eq:iterate}\mathcal{L}_{\phi-s \tau}^m g_\e(x) = \sum_{\d \in \cA^m} g_{\e}(f_{\d}(x))  e^{S_n\phi(f_\d(x)) + s \log |f_{\d}'(x)|} = \sum_{\d \in \cA^m} w_\d(x) |f_{\e\d}'(x)|^\delta e^{-2\pi i \xi \log |f_{\e\d}'(x)|}\end{align}
whenever $x \in [0,1]$ since $|f'_{\e}(z)|^s = |f'_{\e}(z)|^\delta e^{-2\pi i \xi \log |f'_{\e}(z)|}$ and using the chain rule.
Here we will employ the contraction of transfer operator $\cL_{\phi - s\tau}$, Theorem \ref{thm:c1cont} with first a fixed $0 < \Xi < 1$ to obtain 
$$|\mathrm{Im \,} s| = 2\pi |\xi| \geq t_0(\Xi)$$ 
by the choice of $\xi$. Thus we have for some $\delta_1(\Xi) > 0$ that

\begin{align*}
&\int\limits_{|\xi|>t_0/2\pi} \hat{h}(\xi)\sum_{\d \in \cA^m} w_\d(x) |f_{\e\d}'(x)|^\delta e^{-2\pi i \xi \log |f_{\e\d}'(x)|}\,d\xi\\
& \leq \int\limits_{|\xi|>t_0/2\pi} |\hat{h}(\xi)| \cdot ||\cL_{\phi-s\tau}^m g_\e ||_{\infty}\,d\xi\\
& \leq \int\limits_{|\xi|>t_0/2\pi} |\hat{h}(\xi)| \cdot ||\cL_{\phi-s\tau}^m g_\e ||_{\mathrm{Lip},2\pi \xi}\,d\xi\\
& \leq \int\limits_{|\xi|>t_0/2\pi} |\hat{h}(\xi)| \cdot C_{\Xi}|\mathrm{Im\,} s|^{\Xi} e^{-\delta_1(\Xi) m}  \|g_\e\|_{\mathrm{Lip},2\pi \xi} \,d\xi \\
& \lesssim_\Xi C_{\eps,n} e^{-\lambda \delta (2n-m)} e^{-\delta_1(\Xi) m} \int\limits_{|\xi|>t_0/2\pi} |\hat{h}(\xi)| \cdot |\xi|^{\Xi}\,d\xi.
\end{align*}
Here we used the bounded distortion for $T$ to bound the Lipschitz norm of $g_\e$. First of all, we have 
$$|g_\e(z)|= |f_\e(z)|^\delta \leq C_{\eps,n} e^{-\lambda \delta (2n-m)} \quad \text{and} \quad |g_\e'(z)| \lesssim |\xi||f_\e(z)|^\delta \leq  C_{\eps,n} e^{-\lambda \delta (2n-m)} |\xi|$$
so
$$ \|g_\e\|_{\mathrm{Lip},2\pi \xi} = \|g_\e\|_\infty + \frac{\mathrm{Lip}(g_\e)}{2\pi |\xi|} \lesssim C_{\eps,n} e^{-\lambda \delta (2n-m)}.$$ 
Indeed, after fixing a branch of the logarithm, using bounded distortion $|f''_\e(z)| \leq B |f'_\e(z)|$, and $|s| \lesssim |\xi|$, we obtain that
$$|g_\e'(z)| = \frac{|s|}{|f'_\e(z)|} |\exp(s \log |f'_\e(z)|)| |f''_\e(z)| = \frac{|s||T''(f_{\e}z)|}{|T'(f_{\e}z)|^2} |g_\e(z)| \lesssim |\xi| |g_\e(z)|$$
where we use the inverse rule for differentiable functions and the chain rule.

Using integration by parts, we have for the Fourier transform $\hat{g}$ that for all $\xi \in \R$ the following estimate holds:
\[|\hat{h}(\xi)|\leq \frac{1}{1+|2\pi \xi|^2}(\|h\|_{L^1}+\|h''\|_{L^1}) .\]
Then in particular as $\Xi < 1$, we have
\[\int_{|\xi|>t_0 / 2\pi} |\hat{h}(\xi)| \cdot |\xi|^{\Xi}\,d\xi\leq \int \frac{\xi^{\Xi}}{1+|2\pi \xi|^2}(\|h\|_{L^1}+\|h''\|_{L^1}) \,d\xi \lesssim \|h\|_{L^1}+\|h''\|_{L^1}. \]

\textbf{Step 5.} We are left with the case $|\xi| \leq t_0 / 2\pi$, that is, an estimation for
$$\int_{|\xi| \leq t_0 / 2\pi} \hat{h}(\xi)\sum_{\d \in \tilde \cR_m(\eps)} w_\d(x) |f_{\e\d}'(x)|^\delta e^{-2\pi i \xi \log |f_{\e\d}'(x)|}\,d\xi$$
Let us bound
\begin{align*}
\sup_{|\xi|\leq t_0/2\pi} \Big|\hat{h}(\xi)\sum_{\d \in \tilde\cR_m(\eps)} w_\d(x) |f_{\e\d}'(x)|^\delta e^{-2\pi i \xi \log |f_{\e\d}'(x)|} \Big|& \leq 
\sup_{|\xi|\leq t_0/2\pi} |\hat{h}(\xi)|\sum_{\d \in \tilde\cR_m(\eps)} w_\d(x) |f_{\e\d}'(x)|^\delta\\
& \leq C_{\eps,2n-m} e^{-2 \lambda \delta n} |J|  
\end{align*}
since $|\hat{h}(\xi)|\leq \|h\|_{L^1}\leq 3|J|$ and by the chain rule
\begin{align*}\sum_{\d \in\tilde\cR_m(\eps)} w_\d(x) |f_{\e\d}'(x)|^\delta &= \sum_{\d \in \tilde\cR_m(\eps)} w_\d(x)  |f_{\e}'(f_\d(x))|^\delta |f_\d'(x)|^\delta\\ &\lesssim C_{\eps,2n-m} e^{-\lambda \delta (2n-m)}\sum_{\d \in \tilde\cR_m(\eps)} w_\d(x)  |f_\d'(x)|^\delta \\
& \lesssim C_{\eps,2n-m} e^{-\lambda \delta (2n-m)} e^{-\lambda \delta m}\\
& = C_{\eps,2n-m} e^{-2\lambda \delta n}
\end{align*}
by the properties of $\tilde \cR_m(\eps)$.

\textbf{Step 6.} Combining Step 1, 2, 3, 4 and 5 gives us
\begin{align*}
	&\sharp\{\d \in \tilde \cR_m(\eps) : e^{2\lambda n}f_{{\e \d}}'(x) \in B(y,e^{-\eps_0 m})\}^2 \\
	&\lesssim_\Xi C_{\eps,2n-m}e^{-\lambda \delta (2n-m)}  E_\e(x) [e^{-\delta_1(\Xi) m} (\|h\|_{L^1}+\|h''\|_{L^1})+ |J|],
\end{align*}
where
$$E_\e(x) := \sum_{\d \in \tilde\cR_m(\eps) }\frac{1}{ w_\d(x) |f_{\e\d}'(x)|^\delta}.$$
Finally, let us now analyse all the quantities we have. Lemma \ref{lma:regularmarkov} gives that
$$E_\e(x) \lesssim C_{\eps,2 m}^\delta e^{2\lambda \delta n} e^{\lambda \delta m} \sharp \tilde \cR_m(\eps)$$
so for some $\kappa > 0$
$$C_{\eps,2n-m}e^{-2\lambda \delta n} E_\e(x) \lesssim C_{\eps,n}^\kappa \sharp \tilde \cR_m(\eps)^2$$
Moreover, recall that \eqref{eq:Jbound} gives
$$ |J| \leq 2R r$$
and
$$\frac{1}{|J|^3} \leq \frac{1}{2} R^{3} r^{-3}$$
and when inputting $r = e^{-\eps_0 m}$ and $R = 16^2CC_{\varepsilon,m}^{3\lambda}$, we obtain
$$ |J| \lesssim C_{\varepsilon,m}^{3\lambda} e^{-\eps_0 m}$$
and
$$\frac{1}{|J|} \lesssim  C_{\varepsilon,m}^{3\lambda} e^{\eps_0 m}$$
Then by the choice of $h$, we have
$$\|h\|_{L^1}+\|h''\|_{L^1} \leq |J| + \frac{1}{|J|} \lesssim C_{\varepsilon,m}^{3\lambda} e^{-\eps_0 m} + C_{\varepsilon,m}^{3\lambda} e^{\eps_0 m}.$$
Thus we obtain
\begin{align*}&\sharp\{\d \in \tilde \cR_m(\eps) : e^{2\lambda n}f_{{\e \d}}'(x) \in B(y,e^{-\eps_0 m})\}^2\\
& \lesssim C_{\eps,n}^\kappa \sharp \tilde \cR_m(\eps)^2[ e^{-\delta_1(\Xi) m}  (C_{\varepsilon,m}^{3\lambda} e^{-\eps_0 m} + C_{\varepsilon,m}^{3\lambda} e^{\eps_0 m}) + C_{\varepsilon,m}^{3\lambda} e^{-\eps_0 m}]
\end{align*}
Here we see that the possible obstacle to the decay would come from the term
$$e^{-\delta_1(\Xi) m} e^{\eps_0 m} = e^{-(\delta_1(\Xi) - \eps_0) m}.$$
But since we defined $\eps_0 > 0$ such that $\eps_0 = \min\{\delta_1(\Xi)/2,\lambda/4\}$, recall Remark \ref{rmk:parameters}(2), we obtain $c_0 := \delta_1(\Xi) - \eps_0 \geq \delta_1(\Xi)/2 > 0$. This completes the proof of the estimate \eqref{eq:scalem} and thus the whole lemma.
\end{proof}

Now we can prove Proposition \ref{lma:distribution}

\begin{proof}[Proof of Proposition \ref{lma:distribution}] Recall that 
$$J_n(\eps) = \{ \eta \in \mathbb{R}:e^{\eps_0 n/2} \leq |\eta| \leq C_{\varepsilon,n} e^{\eps_0 n}  \}$$
Fixing $\eta \in J_n(\eps)$ and $\sigma \in  [R^{-2}|\eta|^{-1},|\eta|^{-\eps_3}]$, there is a unique $l$ such that $2^{-l-1} < \sigma \leq 2^{-l}$. Define $\mathcal{R}_l^*$ to be the set of $n$-regular pairs $(\a,\d) \in \mathcal{R}_n(\eps)^2$ such that
	$$ e^{-2\lambda s n} \sharp \{ (\b,\c) \in \cR_n(\eps)^2 : |f'_{\a\b}(x_{\d})-f_{\a\c}'(x_{\d})|\leq e^{-2\lambda n} 2^{-l} \} \leq 2^{-(l+1)c_0/2}. $$
	 In this setting, if we have a block $\A$ such that $(\a_{j-1},\a_j)\in \mathcal{R}_l^*$ for every $j=1,...,k$ and every $l$, then by definition of $\mathcal{R}_l^*$ and by definition of $\zeta_{j,{\a}}(\b)$ we have that
	\begin{align*}
	& e^{-2\lambda s n} \sharp \{ (\b,\c) \in \mathcal{R}_n(\eps)^2: |\zeta_{j,{\a}}({\b})-\zeta_{j,{\a}}({\c})| \leq \sigma \} \\
	&  \leq e^{-2\lambda s n} \sharp \{ (\b,\c) \in \mathcal{R}_n(\eps)^2: |f_{\a_{j-1}\b}'(\a_j)-f_{\a_{j-1}\c}'(\a_j)| \leq e^{-2\lambda n} 2^{-l} \} \\
	& \leq 2^{-(l+1)c_0/2} \\
	& \leq \sigma^{c_0/2}.
	\end{align*}
	This therefore tells us that
	$$ \bigcap\limits_{j}\bigcap\limits_{l}\{ \A : (\a_{j-1},\a_j) \in \mathcal{R}_l^* \} \subset \mathcal{W}. $$
	From this containment, we can say that a $k+1$ block $\A$ is not in $\mathcal{W}$ if there exists at least one position $j$ in the block and a scale $l$ such that the pair $(\a_{j-1},\a_j)\notin \mathcal{R}^*$. So to prove the lemma, it is enough to show that $$e^{-2\lambda \delta n} \sharp( \mathcal{R}_n(\eps)^2\setminus \mathcal{R}^*_l ) \lesssim C_{\varepsilon,n}^{\kappa_0}\sigma^{c_0/2}.$$
	We achieve this bound by considering the counting measure $\sharp$ on pairs in $\mathcal{R}_n(\eps)^2$ and use Chebychev's inequality to get an upper bound on $|\mathcal{R}_n(\eps)^2 \setminus \mathcal{R}^*_l|$. We apply Chebychev's inequality to the counting function defined by 
	$$f(\a,\d)=\ \frac{\sharp \{ (\b,\c) \in \cR_n(\eps)^2 : |f'_{\a\b}(x_{\d}) - f'_{\a\c}(x_{\d})|\leq e^{-2\lambda n}2^{-l} \}}{\sharp \cR_n(\eps)^2}$$ 
	which gives us that
	\begin{align*}
	\sharp\{ \mathcal{R}_n(\eps)^2 \setminus \mathcal{R}_l^* \} & = \sharp\{ (\a,\d)\in \mathcal{R}_n(\eps)^2 : f(\a,\d) \geq 2^{-(l+1)c_0/2} \}\\&\leq 2^{(l+1)c_0/2}\sum_{(\a,\d) \in \mathcal{R}_n(\eps)^2} f(\a,\d)\\
	& = e^{-2\lambda \delta n}2^{(l+1)c_0/2}\sharp\{ (\a,\b,\c,\d) \in \cR_n^4: |f'_{\a\b}(x_{\d})-f'_{\a\c}(x_{\d})| \leq e^{-2\lambda n} 2^{-l} \}
	\end{align*}
	Then by Lemma \ref{thm:nonconc} with $\sigma = 2^{-l}$, we have
	$$e^{-2\lambda \delta n}2^{(l+1)c_0/2}\sharp\{ (\a,\b,\c,\d) \in \cR_n^4(\eps): |f'_{\a\b}(x_{\d})-f'_{\a\c}(x_{\d})| \leq e^{-2\lambda n} 2^{-l} \} \lesssim C_{\varepsilon,n}^{\kappa_0} e^{-\ell c_0/2} $$
	which gives the claim since $e^{-\ell c_0/2} \leq \sigma^{c_0/2}$.
\end{proof}

\section{Proof of Theorem \ref{thm:nonlinear}}\label{sec:completion}

We begin the proof by fixing first $\eps > 0$ small enough that $C_{\eps,n}^\kappa = e^{\kappa \eps n}$ have the exponent $\kappa \eps$ small enough in terms of Lyapunov exponent $\lambda = \int \tau \, d\mu > 0$, Hausdorff dimension $\delta = \Hd \mu > 0$ and the non-concentration parameters $\eps_0,c_0 > 0$. Recall that $\eps_0$ and $c_0$ are fixed in Remark \ref{rmk:parameters} and Proposition \ref{lma:distribution} and they only depend on the spectral gap of $\cL_{\phi - (\delta - 2\pi i )\tau}$ obtained from Theorem \ref{thm:c1cont} in the Banach space $C^{\mathrm{Lip}}(I)$.

To be able to apply relevant large deviations (Theorem \ref{thm:largedev}), we need to make sure that the values of $n$ that we consider are sufficiently large. We begin by choosing $n_0(\eps)$. Recall the assumptions (1), (2) and (3) of the growth and bounded variations mapping $T : I \to \R$ in the introduction.
\begin{enumerate}
	\item If $n_1(\eps)$ is the generation that arises from the main large deviation Theorem \ref{thm:largedev}, then we require
	$$ n_0(\eps) \varepsilon_0 > n_1(\eps) $$
	to ensure we have valid regularity at each scale that we need.
	\item If $\gamma$ is the rate of expansion of $(T^n)'$ with respect to $n$, and $C > 0$ is the Gibbs constant for $\mu$ (recall \eqref{eq:Gibbs}), we require 
	$$\frac{\log 4}{\varepsilon_0 n_0}<\varepsilon_2, \,\,\,\,\, \frac{\log 4C^2}{\log(\gamma^{2 \varepsilon_0 n_0})}<\varepsilon /2 \text{\,\,\,\,\,\and\,\,\,\,\,} \frac{e^{-\delta \varepsilon_0 n_0}}{1-e^{-\delta}} < e^{-\delta \varepsilon_0 n_0/2}$$
	to ensure that we get decay on multiregular blocks of words.
\end{enumerate}

Now, from the main sum-product estimate (Lemma \ref{lma:bdexponential}), fix the parameters $k \in \N$ and $\eps_2 > 0$, which we note depend on the non-concentration parameter $\eps_0 > 0$ fixed in Remark \ref{rmk:parameters}(2).

Fix a frequency $\xi \in \R$ such that $|\xi|$ is large enough. Let $n \in \N$ be the number such that
$$e^{(2k+1) n \lambda} e^{\eps_0 n} \leq |\xi| \leq e^{(2k+1) (n+1) \lambda} e^{\eps_0 (n+1)}.$$
so up to a multiplicative constant depending on $k$ and $\eps_0$, we have:
$$|\xi| \sim e^{(2k+1) n \lambda} e^{\eps_0 n},$$
where $|\xi| \sim N$ means that there exists a constant $c > 0$ such that $c^{-1} N \leq |\xi| \leq cN$. Recall that the cardinality
$$\sharp \cR_n(\eps) \lesssim C_{\varepsilon,n}^{3\lambda} e^{-\lambda \delta n}$$
and if $\a \in \cR_n(\eps)$, we have
$$w_{\a}(x) = e^{S_n \phi(f_\a(x))} \leq C_{\varepsilon,n}^{3\lambda}e^{-\lambda \delta n}$$
for all $x \in [0,1]$, see Lemma \ref{lma:regularmarkov} on the regular words.

We begin by recalling the estimate from Proposition \ref{lma:exponentialsum}. Recall that there we have
$$J_n(\eps) = \{ \eta \in \mathbb{R}:e^{\eps_0 n/2} \leq |\eta| \leq C_{\varepsilon,n} e^{\eps_0 n}  \}.$$ 
and we have the following estimate in terms of exponential sums and error terms:
	\begin{align*} | \widehat{\mu}(\xi)| ^2 \lesssim_\mu \,\,&C_{\varepsilon,n}^{(2k+1)\lambda}e^{-\lambda (2k+1)\delta n} \sum\limits_{{\A} \in \cR_n^{k+1}(\eps)} \sup\limits_{\eta \in J_n(\eps)} \Big| \sum\limits_{{\B \in \cR_n^k(\eps)}} e^{2\pi i \eta \zeta_{1,{\A}}({\bf b_1})...\zeta_{k,{\A}}({\b}_k)} \Big|  \\
	& + e^{2k}C_{\varepsilon,n}^{k+2} e^{-\lambda n}e^{\eps_0 n} +\mu([0,1]\setminus R_n^{k+1}(\eps))^2+ C_{\varepsilon,n}^{4k\lambda} \rho^{2n} + \mu(I\setminus R_n(\eps))+C_{\varepsilon,n}^2e^{- \delta \eps_0 n/2},
	\end{align*}
	where if blocks $\A \in \cR_n^{k+1}(\eps), \B \in \cR_n^{k}(\eps)$, $j \in \{1,\dots,k\}$ and ${\b} \in \mathcal{R}_n$, we defined
$$ \zeta_{j,{\A}}({\b}) = e^{2\lambda n}f_{{\bf a_{j-1}b}}'(x_{{\bf a_j}}). $$

Now, recall that in Proposition \ref{lma:distribution}, we defined the well-distributed blocks of words $\A \in \cW$ as follows: for all $j=1,\dots,k$, $\eta \in J_n(\eps)$ and $\sigma \in [R^2|\eta|^{-1},|\eta|^{-\eps_3}]$, where we have that
	$$ \sharp \{ ({\bf b, c}) \in \mathcal{R}_n(\eps)^2: |\zeta_{j,{\A}}({\b})-\zeta_{j,{\A}}({\c})| \leq \sigma \} \leq \sharp \cR_n(\eps)^2 \sigma^{c_0/2}. $$
Proposition \ref{lma:distribution} said most blocks are well-distributed: there exists $\kappa_0>0$,
	$$ e^{-\lambda (k+1)\delta n} \sharp (\mathcal{R}_n^{k+1}(\eps)\setminus \mathcal{W}) \leq C_{\varepsilon,n}^{2\kappa_0} \sigma^{c_0/4}.$$
For the exponential sum term
$$C_{\varepsilon,n}^{(2k+1)\lambda}e^{-\lambda (2k+1)\delta n} \sum\limits_{{\A} \in \cR_n^{k+1}(\eps)} \sup\limits_{\eta \in J_n(\eps)} \Big| \sum\limits_{{\B \in \cR_n^k(\eps)}} e^{2\pi i \eta \zeta_{1,{\A}}({\bf b_1})...\zeta_{k,{\A}}({\b}_k)} \Big|$$
in the estimate for $|\widehat{\mu}(\xi)|$ we begin by removing not well-distributed blocks, which gives
\begin{align*}
& C_{\varepsilon,n}^{(2k+1)\lambda}e^{-(2k+1)\lambda \delta n} \sum\limits_{{\a} \in \cR_n^{k+1} \setminus \mathcal{W}} \sup\limits_{\eta \in J_n(\eps)} \Big| \sum\limits_{{\B \in \cR_n^k(\eps)}} e^{2\pi i \eta \zeta_{1,{\A}}({\bf b_1})...\zeta_{k,{\A}}({\b}_k)} \Big|\\
& \leq C_{\varepsilon,n}^{(2k+1)\lambda}e^{-(2k+1)\lambda \delta n} \sum\limits_{{\A} \in \cR_n^{k+1}\setminus \mathcal{W}} \sup\limits_{\eta \in J_n(\eps)} \sum\limits_{{\B \in \cR_n^k(\eps)}} 1\\
& \leq C_{\varepsilon,n}^{(2k+1)\lambda}e^{-(2k+1)\lambda \delta n} \sum\limits_{{\A} \in \cR_n^{k+1}\setminus \mathcal{W}} C^kC_{\varepsilon,n}^{3\lambda k}e^{k\lambda \delta n} \\
&\lesssim C^kC_{\varepsilon,n}^{(5k+7)\lambda} e^{-(k+1)\lambda \delta n}e^{(k+1)\lambda \delta n} \sigma^{c_0/4} \\
& \lesssim C_{\varepsilon,n}^{\kappa_0'} \sigma^{c_0/4}
\end{align*}
where $\kappa_0'>0$. 
Hence we have that
\begin{align*}  | \widehat{\mu}(\xi)| ^2 \lesssim\,\, & C_{\varepsilon,n}^{k\lambda}e^{-k\lambda \delta n} \max_{{\bf A \in \cW}} \sup\limits_{\eta \in J_n(\eps)} \Big| \sum\limits_{{\B \in \cR_n^k(\eps)}} e^{2\pi i \eta \zeta_{1,{\A}}({\bf b_1})...\zeta_{k,{\A}}({\b}_k)} \Big|\\
& + C_{\varepsilon,n}^{\kappa_0'} \sigma^{c_0/4} + e^{2k}C_{\varepsilon,n}^{k+2} e^{-\lambda n}e^{\eps_0 n} +\mu([0,1]\setminus R_n^{k+1}(\eps))^2\\
& + C_{\varepsilon,n}^{4k\lambda} \rho^{2n} + \mu([0,1] \setminus R_n(\eps))+C_{\varepsilon,n}^2e^{- \delta \eps_0 n/2}.
\end{align*}
Recall Remark \ref{rmk:parameters}, where we defined $R := 16^2CC_{\varepsilon,n}^{3\lambda}$. Thus $\zeta_{j,\A}(\b) \in [R^{-1},R]$. Moreover, if we fix $\eta \in J_n(\eps)$, $A \in \cW$ and $\sigma \in [R^{-2}|\eta|^{-1},|\eta|^{-\eps_3}]$, we have by the definition of $\cW$ that
$$ \sharp \{ (\b,\c)\in \mathcal{R}_n(\eps)^2 : |\zeta_{j,\A}(\b) - \zeta_{j,\A}(\c)| \leq \sigma\} \leq \sharp \cR_n(\eps)^2 \sigma^{c_0/2}$$
Thus we may apply Lemma \ref{lma:newexponential} to the maps $\zeta_{j,\A} : \cR_n(\eps) \to [R^{-1},R]$ with $N = e^{\lambda \delta n}$. It implies that for all $\A \in \cW$ and $\eta \in J_n(\eps)$ that
$$C_{\varepsilon,n}^{k\lambda}e^{-k\lambda \delta n}\Big| \sum\limits_{{\B \in \cR_n^k(\eps)}} e^{2\pi i \eta \zeta_{1,{\a}}({\B_1})...\zeta_{k,{\a}}({\b}_k)} \Big| \lesssim R^{2k\lambda} |\eta|^{-\eps_2} \lesssim C_{\varepsilon,n}^{9k\lambda}e^{-\eps_0 \eps_2 n/2}$$
since $|\eta| \geq C_{\varepsilon,n}^{-1}e^{\eps_0 n/2}$ by  the definition of $J_n(\eps)$ as $\eta \in J_n(\eps)$. By making sure that $\eps > 0$ is chosen small enough and as $\eps_0 \leq \lambda/2 < \lambda$, recall Remark \ref{rmk:parameters}(2) for the choice of $\eps_0$ using the spectral gap of the transfer operator, which is independent of $\xi$, we have proved
$$|\widehat{\mu}(\xi)| = O(|\xi|^{-\alpha})$$
as $|\xi| \to \infty$ for some $\alpha > 0$. The proof of Theorem \ref{thm:nonlinear} is complete.

\section{Acknowledgements}

We thank Semyon Dyatlov for discussions related to the problem that led to this work. The proof of the non-concentration of the derivatives is based on discussions TS had with Jialun Li in Bordeaux and Oxford in 2019, which in turn were based on discussions Jialun Li had with Fr\'ed\'eric Naud in Avignon in 2018. We thank Jialun Li and Fr\'ed\'eric Naud that we could include the ideas to this work. We also thank Jialun Li for pointing out the work of Stoyanov that generalised the work of Naud and allowed us to drop a condition that the dimension has to be close enough to the dimension of the attractor. We also thank Michael Magee for useful discussions on total non-linearity and Non-Local Integrability. Furthermore we thank Demi Allen, Xiong Jin, Thomas Jordan, Tomas Persson and Charles Walkden for useful discussions during the preparation of this manuscript. We thank the anonymous referee for comments on a previous version of the manuscript.

\bibliographystyle{plain}

\vspace{0.1in}

\end{document}